\definecolor{dodgerblue}{rgb}{0.12, 0.56, 1.0}
\newtheorem{theorem}{Theorem}[section]
\newtheorem{proposition}[theorem]{Proposition}
\newtheorem{lemma}[theorem]{Lemma}
\newtheorem{remark}[theorem]{Remark}
\newtheorem{definition}[theorem]{Definition}
\newtheorem{corollary}[theorem]{Corollary}
\newcommand{\lra}[1]{ \langle #1 \rangle}
\newcommand{\ga}{\gamma}
\newcommand{\bb}{\mathbb}
\newcommand{\Index}{\mbox{Index}}
\newcommand{\bea} {\begin{eqnarray*}}
\newcommand{\beq} {\begin{equation}}
\newcommand{\bey} {\begin{eqnarray}}
\newcommand{\eea} {\end{eqnarray*}}
\newcommand{\eeq} {\end{equation}}
\newcommand{\eey} {\end{eqnarray}}
\newcolumntype{P}[1]{>{\centering\arraybackslash}p{#1}}
\title{Some observations on projective Stiefel Manifolds}
\author[B. Kundu]{Bikramjit Kundu}
\address{Department of Mathematics, Indian Institute of Science Education and Research Bhopal, Bhopal Bypass Road, 462 066, Madhya Pradesh, India}
\email{bikramju@gmail.com, bikramjitkundu@iiserb.ac.in}
\author[S. Podder]{Sudeep Podder}
\address{Indian Statistical Institute, 8th Mile, Mysore Road, RVCE Post, Bangalore 560059, India}
\email{sudeeppodder1993@gmail.com\\ sudeep\_pd@isibang.ac.in}
\date{}
\keywords{Projective Stiefel manifolds, Flip Stiefel manifolds, Upper characteristic rank.}
\subjclass[2020]{Primary: 57R20}
\begin{document}
\begin{abstract}
    In this note, we compute the upper characteristic rank of the projective Stiefel manifolds over $\bb R, \bb C$ and $\bb H$ and the flip Stiefel manifolds. We also provide bounds for the $\mathrm{cup}$ lengths of these spaces. We also provide
necessary conditions for the existence of $S^3$-map between quaternionic Stiefel manifolds using the Fadell-Husseini index.
\end{abstract}
\maketitle

\section{Introduction}
It is well known that the mod two cohomology algebra of the real projective space $\bb RP^n$ is generated by the first Stiefel-Whitney class of the tautological line bundle over $\bb RP^n$. If $n$ is odd, the mod two cohomology algebra of the unoriented Grassmannian $G_{n,k}$ of $k$-planes in $\mathbb R^n$ is generated by the Stiefel-Whitney classes of its tangent bundle (see \cite[Lemma 2.1]{ps1988}). It is natural to ask whether the cohomology algebra of a finite CW complex $Y$ is generated by the Stiefel-Whitney classes of some vector bundle over $Y$, or, at least, what is the largest integer $r\leq \dim (Y)$ such that all the cohomology groups up to dimension $r$ is generated by Stiefel-Whitney classes of some vector bundle over $Y$. The integer $r$ is called the \textit{upper characteristic rank} of $Y$. Formally, a vector bundle $\xi$ over a CW complex $Y$, is said to have \textit{characteristic rank} $r$ if $r~(\leq \dim(Y))$ is the largest integer such that all the cohomology classes in $H^i(Y; \bb Z_2), i\leq r$ can be expressed as a polynomial in the Stiefel-Whitney classes of $\xi$. The characteristic rank of a vector bundle 
$\xi$ is denoted by $\mathrm{charrank}(\xi)$.
\begin{definition}
   Let $Y$ be a CW complex. The upper characteristic rank of $Y$, denoted by $\mathrm{ucharrank}(Y)$, is defined to be
   \[\mathrm{ucharrank}(Y) = \max\{\mathrm{charrank}(\xi)|~\xi \textit{~is a bundle over $Y$}\}.\]
\end{definition}
The characteristic rank of a smooth manifold was introduced by Korba\v{s} in \cite{Korbaš2010}. Naolekar and Thakur, in \cite{NT2014}, introduced the characteristic rank of vector bundles over CW complexes. In terms of their definition, for a closed smooth manifold $M$, the characteristic rank $\mathrm{charrank}(M)$ of $M$ is the $\mathrm{charrank}(TM)$, the characteristic rank of the tangent bundle $TM$ of $M$. Recall that the cup length $\mathrm{cup}_R(M)$ of a cohomology ring $H^{*}(M; R)$, where $R$ is the coefficient ring, is the largest $k$ such that there exists $y_1, y_2,\ldots,y_k \in H^{*}(M;R)$ with $y_1 \smile y_2 \smile \cdots \smile y_k\neq 0$. Korba\v{s} showed that if $M$ is a connected smooth manifold of dimension $d$, unoriented cobordant to zero, and $\tilde{H}^k(M; \bb Z_2)$ is the first nontrivial reduced cohomology group, then 
    \[
        \mathrm{cup}_{\bb Z_2}(M)\leq 1+\frac{d-1-\mathrm{charrank(TM)}}{k},
    \]
provided $\mathrm{charrank(TM)}\leq d-2$ (see \cite{Korbaš2010}). He asked what the $\mathrm{charrank}(M)$ for all closed, connected manifolds is. The authors, in \cite{NT2014}, coined the term upper characteristic rank and generalised the above bound for the $\bb Z_2$-cup length (see \cite[Theorem 1.2]{NT2014}).

Although the upper characteristic rank of a CW complex is an important invariant and has implications to the $\bb Z_2$-cup length, Betti number and LS-category (see \cite{KNT2012}, \cite{korbas2014}), the computation of upper characteristic rank is generally a complex problem, and not much seems to be known. In \cite{BK2013}, the authors have derived upper bounds
for the characteristic rank of the oriented Grassmann manifold $\tilde{G}_{n,k}$ if $n$ is odd, with a sharp upper bound for $\mathrm{charrank}(\tilde{G}_{n,2})$, and provided a general upper bound for the characteristic rank of smooth null-cobordant manifolds. In \cite{Korbaš2015}, \cite{PETROVIC2017}, the authors determined the characteristic rank of the canonical vector bundle over the oriented Grassmann manifold $\tilde{G}_{k,n}$ of $k$-dimensional subspaces in $\bb R^n$ for specific values of $k$ and $n$ and determined the $\bb Z_2$-cup-length for these spaces. Balko and Korba\v{s} computed the characteristic rank of smooth null-cobordant manifolds in \cite{BK2013}. In \cite{BK2010}, they have also computed bounds for the characteristic rank of null cobordant manifolds and total spaces of smooth fibre bundles whose fibres are totally non-homologous to zero. For $n\ge 8$, in \cite{BC2020}, Basu and Chakraborty have calculated the upper characteristic rank of $\tilde{G}_{n,3}$ to be the characteristic rank of the oriented tautological bundle over $\tilde{G}_{n,3}$. For the product of spheres, real and complex projective spaces, the Dold manifolds and the stunted projective spaces, the characteristic rank of vector bundles was computed in \cite{NT2014}. In \cite{KNT2012}, the authors have given a complete description of the upper characteristic rank of the Stiefel manifolds $\bb FV_{n,k}$ where $\bb F=\bb R, \bb C$ or $\bb H$, except for few cases where they could provide lower bounds for upper characteristic rank (see Theorem \ref{KNT_main}).   

In this article, we compute the upper characteristic rank of the projective Stiefel manifolds and the flip Stiefel manifolds. Recall that the Stiefel manifold $\bb FV_{n, k}$ is the space of orthonormal-$k$ frames in $\bb F^n$. Note that $\bb FV_{n, k}\subset \prod_k \bb FS^{n-1}$ where $\bb FS^{n-1}$ is the sphere of $\bb F^n$. The Stiefel manifold inherits the subspace topology from the product of spheres in $\bb F^n$. The projective Stiefel manifolds $\bb FX_{n, k}$ is obtained as quotient of $\bb FV_{n, k}$ by the free left action, given by $z(w_1, \dots, w_k)=(zw_1, \dots, zw_k)$, of the group of absolute value 1 elements of $\bb F$. The flip Stiefel manifolds were recently introduced by Basu \textit{et al.} in \cite{BFG2023}. Let $C_n$ be the cyclic group of order $n$ and $C_2= \lra{a}$. The flip Stiefel manifold $FV_{n, 2k}$ is defined to be the quotient of $\bb RV_{n,2k}$ by the action $a(x_1,x_2,\ldots, x_{2k-1}, x_{2k}) =(x_2, x_1,\ldots, x_{2k}, x_{2k-1})$ of $C_2$.

We assume $k>1$ for $\bb RX_{n,k}$ and $2k\geq 2$  for $FV_{n, 2k}$ throughout. Our computations are complete except for a few cases where we could provide bounds for the upper characteristic rank. The main theorem of \S \ref{Real_case} is the following.

Let $N_1=\min\big{\{}j~|~n-k+1\leq j\leq n \text{, and $\binom{n}{j}$ is odd}\big{\}}$ and $N_2= \min\big{\{}j~|~n-2k+1\leq j\leq n \text{ and $\binom{k+j-1}{j}$ is odd}\big{\}}$. Let $X$ be either the projective Stiefel manifold $\bb RX_{n, k}$ or the flip Stiefel manifold $FV_{n, 2k}$, and $c=1$ or $2$ respectively as $X=\bb RX_{n, k}$ or $FV_{n, 2k}$. 
\begin{theorem}\label{main1}
    Let $1<k<n$. The upper characteristic rank of $X$ is as follows:
    
          (a) Let $n - ck\neq 1,2,4$ or $8$. \vspace{-0.375cm}
          \begin{enumerate} 
              \item If $N_c=n-ck+1$, then $\mathrm{ucharrank}(X)\geq n-ck$, the equality holds if $n-ck$ is odd and $n-ck+1$ is not a power of 2, or $n-ck$ is even.
              \item If $N_c\neq n-ck+1$, then $\mathrm{ucharrank}(X)= n-ck-1$.
          \end{enumerate}\vspace{-0.375cm}
          (b) Let $n - ck = 1$. \vspace{-0.375cm}
          \begin{enumerate}
              \item If $n\equiv 2, 3\pmod 4$, then $\mathrm{ucharrank}(\bb RX_{n,k})=2$. 
              \item If $n\equiv 0,2\pmod 4$, then $\mathrm{ucharrank}(FV_{n, 2k})\geq 2$.
              \item The $\mathrm{ucharrank}(X)= 0$ otherwise.
          \end{enumerate}\vspace{-0.375cm}
          (c) Let $n-ck = 2$.\vspace{-0.375cm}
          \begin{enumerate}
              \item If $N_c\neq 4$, then $1\le\mathrm{ucharrank}(X)\le 2$. Further, if $N_c=3$, then $\mathrm{ucharrank}(X)=2$.
              \item If $N_c=4$ then $1\le \mathrm{ucharrank}(X)\le 4$.
          \end{enumerate}\vspace{-0.375cm}
          (d) Let $n-ck = 4$.\vspace{-0.375cm}
          \begin{enumerate}
              \item If $N_c\neq 6$, then $3\leq \mathrm{ucharrank}(X)\leq 4$. Further, if $N_c=5$, then $\mathrm{ucharrank}(X)=4$.
              \item If $N_c=6$, then $3\leq \mathrm{ucharrank}(X)\leq 6$.
          \end{enumerate}\vspace{-0.375cm}
          (e) If $n-ck=8$.  \vspace{-0.375cm}
          \begin{enumerate}
              \item If $N_c\neq 10$, then $7\le\mathrm{ucharrank}(X)\leq 8$, and in particular, for $N_c = 9$, $\mathrm{ucharrank}(X)=8$.
              \item If $N_c =10$, then $7\le\mathrm{ucharrank}(X)\le 10$.
         \end{enumerate}
    \end{theorem}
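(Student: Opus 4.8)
The plan is to compute the mod-2 cohomology of $X$ up through the relevant range, identify the first cohomology class that provably cannot be written as a polynomial in Stiefel–Whitney classes of any bundle, and separately exhibit a bundle whose characteristic rank realizes the claimed lower bound. The structural backbone is the fibration $\bb FS^{\,c\cdot?}\text{-type}$ description: for $X=\bb RX_{n,k}$ we have the fibration $\bb RV_{n,k}\to \bb RX_{n,k}\to \text{(point data)}$ together with the $S^0$-bundle over $\bb RV_{n,k}$, while for $FV_{n,2k}$ the defining $C_2$-action gives a double cover $\bb RV_{n,2k}\to FV_{n,2k}$. In both cases the number $c$ packages the two situations uniformly: the low-dimensional cohomology of $X$ agrees with that of the corresponding Stiefel manifold up to the shift governed by $N_c$, which is exactly the first index $j$ in the admissible range making a certain binomial coefficient odd. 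I would first set up the cohomology additively (using the known $H^*(\bb FV_{n,k};\bb Z_2)$ and the Gysin/Serre spectral sequence for the relevant circle or $C_2$ quotient), and locate the bottom nonzero class and the first indecomposable.

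**Next I would** handle the generic case (a), which carries the main content, and then treat the exceptional dimensions $n-ck\in\{1,2,4,8\}$ separately. The key observation is that $\mathrm{ucharrank}(X)$ is controlled by two competing thresholds: the dimension $n-ck$ coming from the ``bottom'' cohomology of the quotient, and the parity condition encoded in $N_c$. When $N_c=n-ck+1$ the SW-classes of a suitable bundle reach up to degree $n-ck$, giving the lower bound $\mathrm{ucharrank}(X)\ge n-ck$; to pin down equality one argues that the class in degree $n-ck+1$ is indecomposable and lies outside the subalgebra generated by Stiefel–Whitney classes, \emph{unless} an exceptional coincidence ($n-ck+1$ a power of $2$ in the odd case) lets a Wu-formula / Steenrod-square relation express it. When $N_c\neq n-ck+1$, the first indecomposable appears one degree lower, forcing $\mathrm{ucharrank}(X)=n-ck-1$. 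Throughout, the upper bounds come from showing a specific class is not a polynomial in any $w_i(\xi)$ — typically by a Steenrod operation or Poincaré-duality obstruction — while the lower bounds come from writing down an explicit bundle (the tautological/pullback bundle) and checking its SW-classes generate the cohomology up to the claimed degree.

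**The dimensions $n-ck\in\{1,2,4,8\}$ are genuinely special** because these are exactly the Hopf-invariant-one / normed-division-algebra dimensions, where the relevant spheres $S^{n-ck}$ and projective-type quotients carry extra multiplicative structure (or extra room for a bundle's SW-classes to survive). Here I expect the clean equalities to degrade into the stated inequalities: the lower bound $n-ck-1$ (or $2$, or $7$) persists by the same explicit-bundle construction, but the upper bound loosens because the potential obstructing class in the top degree can sometimes be generated. For part (b), $n-ck=1$, the base reduces to a circle-quotient of a sphere, and the answer bifurcates purely on the congruence of $n$ mod $4$, which is a direct parity computation on the one or two surviving low classes; the residual case gives $\mathrm{ucharrank}=0$ since even $H^1$ fails to be characteristic. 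For (c),(d),(e) the upper bound jumps to $n-ck+2$ precisely when $N_c=n-ck+2$, reflecting that the first odd binomial is pushed up and an extra band of generators can be SW-classes.

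**The main obstacle** will be the upper-bound (equality) direction: proving that a specific indecomposable class \emph{cannot} be expressed as a polynomial in the Stiefel–Whitney classes of \emph{any} vector bundle over $X$. Lower bounds are comparatively mechanical (exhibit a bundle, compute), but the upper bound is a statement quantified over all bundles, so it requires a cohomology-intrinsic obstruction. The natural tool is the action of the Steenrod algebra together with Wu's formula: one shows that were the offending class $u$ a polynomial in SW-classes, the total-square constraint $\mathrm{Sq}^i$ applied to the lower-degree generators would force a relation that fails in $H^*(X;\bb Z_2)$. Making this precise in the exceptional dimensions — where the Steenrod operations behave differently exactly because $n-ck$ is a Hopf dimension — is where the argument is most delicate, and is the reason those cases yield only bounds rather than exact values.
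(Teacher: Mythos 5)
Your overall architecture (an explicit bundle for the lower bound; a cohomology-intrinsic obstruction, via Steenrod squares and the power-of-two constraint on the first nonzero Stiefel--Whitney class, for the upper bound) matches the paper in spirit, but two load-bearing ideas are missing. First, the paper's upper bounds are never obtained by working in $H^*(X;\bb Z_2)$ directly: every case hinges on restricting a hypothetical bundle $\alpha$ along $i\colon \bb RV_{n,ck}\to X$. Since $i^*y=0$ and $i^*y_j=z_j$, all low-degree Stiefel--Whitney classes of $\beta=i^*\alpha$ vanish, and one then invokes the known value of $\mathrm{ucharrank}(\bb RV_{n,ck})$ from Theorem \ref{KNT_main} (i.e.\ \cite{KNT2012}). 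This is also the actual reason the dimensions $n-ck\in\{1,2,4,8\}$ are exceptional --- they are exactly the dimensions in which $\mathrm{ucharrank}(\bb RV_{n,k})$ exceeds $n-k-1$ --- rather than a direct appeal to ``extra multiplicative structure on spheres.'' Without this restriction step your proposed Wu-formula computations on $X$ itself would be contaminated by the powers of $y$ (which are nonzero up to degree $N_c-1$), and the quantification over \emph{all} bundles $\alpha$ would not close up; Poincar\'e duality, which you offer as an alternative obstruction, is not used and it is unclear how it would substitute.

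Second, your treatment of case (b) ($n-ck=1$) as ``a direct parity computation on the one or two surviving low classes'' misses the hardest part of that case: the lower bound $\mathrm{ucharrank}(X)\ge 2$ when $N_c=2$ requires \emph{producing} a bundle whose $w_2$ equals $y_2$. The paper does this by computing $\pi_1(X)\cong\bb Z_4$ from the homotopy sequence of $\bb RV_{ck+1,ck}\to X\to BC_2$, deducing $H_2(X;\bb Z)=0$ and $H^2(X;\bb Z)\cong\bb Z_4$, and then reading off from the Bockstein sequence that the reduction $H^2(X;\bb Z)\to H^2(X;\bb Z_2)$ is surjective, so that $y_2$ is the second Stiefel--Whitney class of an orientable $2$-plane bundle coming from a complex line bundle. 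No count of surviving classes produces this bundle. (Minor further points: for $n-ck=1$ the fibre is $\bb RV_{ck+1,ck}$, not a ``circle-quotient of a sphere''; and in case (a) with $N_c=n-ck+1$ and $n-ck$ even, equality again comes from combining $Sq^1$ with the Stiefel-manifold input, not from a Steenrod computation on $X$ alone.)
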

    In the case of the complex and quaternionic projective Stiefel manifolds, we prove the following in \S\ref{complex}.
\begin{theorem}\label{main2}
 The upper characteristic ranks of the complex and quaternionic projective Stiefel manifolds are as follows:\\
 \[\mathrm{ucharrank}(\bb HX_{n, k})= \begin{cases}
        4(n-k)+6 & \textit{if $\binom{n}{n-k+1}$ is odd,}\\
        4(n-k)+2 & \textit{if $\binom{n}{n-k+1}$ is even.}
    \end{cases}\] 
   And, for $k<n$,\[\mathrm{ucharrank}(\bb CX_{n,k}) = \begin{cases}
        2(n-k)+2 & \textit{if $\binom{n}{n-k+1}$ is odd,}\\
        2(n-k) & \textit{if $\binom{n}{n-k+1}$ is even.}
    \end{cases}\] 
\end{theorem}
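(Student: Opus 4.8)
The plan is to treat the complex and quaternionic cases in parallel through the principal bundles
\[
S^1\to \bb CV_{n,k}\xrightarrow{p}\bb CX_{n,k},\qquad S^3\to \bb HV_{n,k}\xrightarrow{p}\bb HX_{n,k},
\]
and to reduce everything to a low-degree computation of $H^*(-;\bb Z_2)$. Write $d=n-k$. In each case the principal bundle has an associated real bundle $\lambda$, the realification of the tautological $\bb C$- (resp. $\bb H$-) line bundle; its structure group is $S^1$ (resp. $Sp(1)$), so $w(\lambda)=1+x$ with $x\in H^2$ in the complex case and $w(\lambda)=1+y$ with $y\in H^4$ in the quaternionic case, where $x$ (resp. $y$) is the mod $2$ Euler class. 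These are the only bundles needed for the lower bounds.

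First I would pin down $H^*(\bb CX_{n,k};\bb Z_2)$ in low degrees. Since $\bb CV_{n,k}$ is $2d$-connected with $H^*(\bb CV_{n,k};\bb Z_2)=\Lambda(a_{2d+1},a_{2d+3},\dots,a_{2n-1})$, the Gysin sequence of the $S^1$-bundle shows that $\cup\, x$ is an isomorphism below degree $2d+1$, so $H^i=\langle x^{i/2}\rangle$ for even $i\le 2d$ and $H^i=0$ otherwise in that range. The key point is the first relation among the $x$-powers: the Borel presentation $\bb CX_{n,k}=U(n)/\big((S^1\cdot I_k)\times U(n-k)\big)$ forces the Chern classes $c_i$ of the defining representation to vanish, hence the truncated identity $c'=(1+x)^{-k}$, whose first relation beyond the $U(n-k)$-range reads $\binom{n}{n-k+1}\,x^{d+1}=0$. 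Thus $x^{d+1}=0$ exactly when $\binom{n}{n-k+1}$ is odd. Feeding this back into the Gysin sequence locates the first class that is not an $x$-power: it sits in degree $2d+1$ when $\binom{n}{n-k+1}$ is even (there $x^{d+1}\neq0$ and a new generator $a$ with $p^*a=a_{2d+1}$ appears) and in degree $2d+3$ when $\binom{n}{n-k+1}$ is odd (there $x^{d+1}=0$, degrees $2d+1,2d+2$ vanish, and the first new generator $b$ satisfies $p^*b=a_{2d+3}$). The quaternionic computation is identical with the shift $2\mapsto4$: the symplectic Borel classes give the same relation $\binom{n}{n-k+1}\,y^{d+1}=0$, so the first non-$y$-power generator lies in degree $4d+3$ (binomial even) or $4d+7$ (binomial odd), with the intervening groups vanishing.

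Denote by $m$ the degree of this first new generator, so $m-1=2d,\,2d+2,\,4d+2,\,4d+6$ in the four cases. For the lower bound, because $w(\lambda)=1+x$ (resp. $1+y$) the subalgebra generated by the Stiefel–Whitney classes of $\lambda$ is exactly $\bb Z_2[x]$ (resp. $\bb Z_2[y]$), which by the previous paragraph contains all of $H^{\le m-1}$; hence $\mathrm{charrank}(\lambda)=m-1$. The upper bound is uniform and sharp. Let $\xi$ be any bundle; the spaces are simply connected, so $w_1(\xi)=0$, and by degree reasons (the generator is in an odd degree in the complex case and in a degree $\equiv3\pmod4$ in the quaternionic case, while all lower classes are $x$- or $y$-powers) it can only be hit by $w_m(\xi)$ itself, never by a product. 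Now Wu's formula gives $\mathrm{Sq}^1 w_{m-1}=w_1 w_{m-1}+(m-2)w_m$; as $m$ is odd, $m-2$ is a unit, so $w_m(\xi)=\mathrm{Sq}^1 w_{m-1}(\xi)$. In each case $H^{m-1}(-;\bb Z_2)$ is either $0$ or spanned by a power of $x$, and $\mathrm{Sq}^1$ kills it (using $\mathrm{Sq}^1x=\mathrm{Sq}^1y=0$, since $x,y$ reduce from integral classes). Therefore $w_m(\xi)=0$ for every $\xi$, no bundle carries the first new generator in its Stiefel–Whitney subalgebra, and $\mathrm{ucharrank}\le m-1$.

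I expect the main obstacle to be the cohomology computation of the second paragraph, specifically the verification that the transgression (equivalently the first Borel relation) is governed by $\binom{n}{n-k+1}$ and that all groups strictly between the top $x$-/$y$-power and the first new generator vanish; the remaining ingredients are then short Gysin bookkeeping, a one-line realification computation, and the single application of Wu's formula above. I would also flag the degenerate case $k=1$, where $\bb CX_{n,1}=\bb CP^{n-1}$ and $\bb HX_{n,1}=\bb HP^{n-1}$ have no second Stiefel generator, so the stated formula must be read as capped by the dimension; the argument above applies verbatim once $k\ge2$.
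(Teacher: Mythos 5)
Your proposal is correct, and it reaches the stated values by a genuinely different route in the key step. For the lower bound you and the paper do the same thing: the realified tautological line bundle $(p')^*\gamma'$ (resp.\ $(p'')^*\gamma''$) has total Stiefel--Whitney class $1+y'$ (resp.\ $1+y''$), and the known ring structure shows every class below the first exterior generator is a power of that class. For the upper bound the paper restricts a hypothetical bundle along $i'$ (resp.\ $i''$) to the Stiefel manifold $\bb CV_{n,k}$ or $\bb HV_{n,k}$, observes that all its lower Stiefel--Whitney classes die there (since $i^*y'=0$ and the fibre is highly connected), and then invokes the classical fact that the first nonvanishing Stiefel--Whitney class of a bundle must occur in a degree that is a power of $2$ --- whereas $2(n-k)+1$, $2(n-k)+3$, $4(n-k)+3$, $4(n-k)+7$ are odd and $>1$. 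You instead stay on the projective Stiefel manifold and kill $w_m(\xi)$ for every bundle $\xi$ via Wu's formula $Sq^1w_{m-1}=w_1w_{m-1}+(m-2)w_m$ with $m$ odd, $w_1(\xi)=0$ by simple connectivity, and $Sq^1$ vanishing on $H^{m-1}$ (a power of the mod $2$ reduction of an integral class, or zero); your parity argument correctly reduces ``the generator is a polynomial in $w(\xi)$'' to ``$w_m(\xi)$ equals the generator''. Both arguments are sound; yours is self-contained on the base and mirrors the Wu-formula technique the paper uses in its real-case lemmas, while the paper's is slightly shorter given the cited structure theorems. Your flag about $k=1$ (where $\bb CX_{n,1}=\bb CP^{n-1}$ has no second exterior generator and the formula exceeds the dimension) is a legitimate caveat that the paper's statement does not address; the argument is complete for $k\ge 2$.
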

 In Section \ref{S^3_index}, we consider the problem of the existence of an $S^3$-equivariant map between quaternionic Stiefel manifolds. One of the useful tools for ruling out equivariant maps between two certain $G$-spaces is the Fadell-Husseini index. Let $G$ be a compact Lie group. A map $f\colon X\to Y$ between two $G$-spaces $X$ and $Y$ is a $G$-equivariant map (or $G$-map), if 
 \[f(g\cdot x)=g\cdot f(x); \forall g\in G, \forall x\in X.\] 
 The existence of equivariant maps between two $G$-spaces are related to Borsuk-Ulam type theorems (see \cite{FH1988}), and hence it is an interesting problem to look for such $G$-equivariant maps between $G$-spaces. Computation for the necessary and sufficient conditions of existence of certain $G$-maps between real and complex Stiefel manifolds using the Fadell-Husseini index has been described in \cite{Ha05}, \cite{Pe13}, \cite{BK21}.  Note that the 3-dimensional sphere $S^3$ can be identified with the group of the unit quaternions in $\bb H$. In Section \ref{S^3_index}, after recalling the Fadell-Husseini index (see \cite{FH1988}) associated with a space $X$, we use it for computing some necessary conditions for the existence of an $S^3$-equivariant map between the quaternionic Stiefel manifolds and certain $(4n-1)$-dimensional spheres. Similar problems have been studied by authors in \cite{PETROVIC2017} and \cite{BK21} for the complex and real Stiefel manifolds. Our main result of Section \ref{S^3_index} is as follows. 
 \begin{theorem}\label{main3}
 (a) If there exists an $S^3$-map between $\bb HV_{n,k}\to \bb HV_{m,l}$,  then $n-k\leq m-l$. Moreover if $n-k=m-l$, $\binom{n}{n-k+1}\vert \binom{m}{m-l+1}$.\\
  (b) There exists an $S^3$-map $f:~Sp(n)\to Sp(m)$ if and only if $n~|~m$.\\
    (c) If there exists an $S^3$-map between $ S^{4n-1}\to \bb HV_{m,l}$, then $n\leq m-l+1$.\\
    (d) If there exists an $S^3$-map $\bb HV_{n,k}\to  S^{4m-1}$, then $m\geq n-k+1$.
 \end{theorem}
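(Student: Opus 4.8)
The plan is to use the Fadell--Husseini index with integral coefficients, taking $G=S^3$ acting on all the spaces by the diagonal left scalar multiplication $z\cdot(w_1,\dots,w_k)=(zw_1,\dots,zw_k)$. Writing $\Index_{S^3}(-)=\ker\big(H^*(BS^3;\bb Z)\to H^*_{S^3}(-;\bb Z)\big)$ and recalling $H^*(BS^3;\bb Z)=\bb Z[u]$ with $|u|=4$, the basic input is monotonicity: an $S^3$-map $X\to Y$ induces $\Index_{S^3}(Y)\subseteq \Index_{S^3}(X)$. All the actions here are free, so every Borel construction collapses to the orbit space; in particular $ES^3\times_{S^3}S^{4n-1}\simeq \bb HP^{n-1}$ and $ES^3\times_{S^3}\bb HV_{n,k}\simeq \bb HX_{n,k}$.

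First I would compute the two indices. From $H^*(\bb HP^{n-1};\bb Z)=\bb Z[u]/(u^n)$ one reads off $\Index_{S^3}(S^{4n-1})=(u^n)$. For the Stiefel manifold I would run the Serre spectral sequence of the fibration $\bb HV_{n,k}\to \bb HX_{n,k}\to BS^3$, whose fibre has integral cohomology the exterior algebra on classes $y_{n-k+1},\dots,y_n$ with $|y_j|=4j-1$. The only fact needed is that the bottom class transgresses as $\tau(y_{n-k+1})=\binom{n}{n-k+1}u^{n-k+1}$ (for $k=1$ this recovers $u^n$, consistent with the sphere); this is precisely the integral relation underlying the cohomology computation used for Theorem \ref{main2}, so I would quote it from \S\ref{complex}. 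Consequently, writing $y\in H^4(\bb HX_{n,k};\bb Z)$ for the image of $u$, the powers $y^j$ are non-torsion for $j\le n-k$, while $y^{n-k+1}$ is torsion of order exactly $\binom{n}{n-k+1}$. In index terms, $\Index_{S^3}(\bb HV_{n,k})$ vanishes in degrees $4j$ for $j\le n-k$ and equals $\binom{n}{n-k+1}\bb Z\cdot u^{n-k+1}$ in degree $4(n-k+1)$.

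With these computations in hand, parts (a), (c), (d) are immediate from the index inclusion. For (a), an $S^3$-map $\bb HV_{n,k}\to\bb HV_{m,l}$ forces $\binom{m}{m-l+1}u^{m-l+1}\in\Index_{S^3}(\bb HV_{n,k})$; since this index is zero in degrees $\le 4(n-k)$ and $\binom{m}{m-l+1}\ne 0$, one gets $m-l\ge n-k$, and in the equality case the degree-$4(n-k+1)$ part of the index yields $\binom{n}{n-k+1}\mid\binom{m}{m-l+1}$. Parts (c) and (d) follow the same pattern by comparison with $(u^n)$: for (c) the containment $\binom{m}{m-l+1}u^{m-l+1}\in(u^n)$ forces $n\le m-l+1$, and for (d) the containment $u^m\in\Index_{S^3}(\bb HV_{n,k})$ forces $m\ge n-k+1$.

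For (b) I would argue both directions. Necessity is the case $k=n$ of (a): here $\binom{n}{1}=n$, so the degree-$4$ part of $\Index_{S^3}(Sp(n))$ is $n\bb Z\cdot u$, and an $S^3$-map $Sp(n)\to Sp(m)$ gives $m\bb Z\cdot u\subseteq n\bb Z\cdot u$, i.e.\ $n\mid m$. For sufficiency, if $m=dn$ I would exhibit the explicit block-diagonal map $A\mapsto \mathrm{diag}(A,\dots,A)$ ($d$ copies) $Sp(n)\to Sp(m)$; since the action is left multiplication by the scalar matrix $zI$, one has $\mathrm{diag}(zA,\dots,zA)=z\,\mathrm{diag}(A,\dots,A)$, so the map is $S^3$-equivariant. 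The main obstacle is the integral transgression $\tau(y_{n-k+1})=\binom{n}{n-k+1}u^{n-k+1}$ and the resulting exact torsion order of $y^{n-k+1}$; everything else is formal once this order is pinned down, which is why I would lean on the cohomological analysis of the projective Stiefel manifolds already carried out earlier in the paper.
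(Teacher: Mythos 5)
Your proposal is correct and follows essentially the same route as the paper: the Fadell--Husseini index for $S^3$ with its monotonicity property, the computation $\Index_{S^3}(S^{4n-1})=\lra{u^n}$, the integral transgression $d_{4(n-k+1)}(z_{n-k+1})=\binom{n}{n-k+1}u^{n-k+1}$ in the Serre spectral sequence of $\bb HV_{n,k}\to\bb HX_{n,k}\to BS^3$ to identify the degree-$4(n-k+1)$ part of $\Index_{S^3}(\bb HV_{n,k};\bb Z)$, and the block-diagonal embedding for the sufficiency in (b). The only difference is cosmetic (you phrase everything integrally, while the paper also records a mod~2 version of the index), so there is nothing substantive to add.
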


The upper characteristic rank of the real projective Stiefel manifolds $\bb RX_{n, k}$ was independently obtained by the author in \cite{dasgupta2024} for some particular values of $n-k$, in fact for $n-k=5, 6$ or $\ge 9$. The article also deals with the complex case. The present article, however, concerns all possible values of $n-k$, and our approach is different from that of \cite{dasgupta2024}.
\numberwithin{equation}{section}
\section{Real Projective Stiefel Manifolds}\label{Real_case}
 We prove Theorem \ref{main1} in this section. To begin with, let us briefly recall the mod two cohomology algebra of the real projective Stiefel manifolds and flip Stiefel manifolds. Let $X$ and $c$ be as in Theorem \ref{main1}. Let $\xi$ be the line bundle associated to the double covering $\bb RV_{n, ck}\to X$ and $\ga$ be the Hopf line bundle over $BC_2$. Consider the fibration
                         \[ \bb RV_{n,ck}\to BO_{n-ck}\to BO_n. \]
 The classifying map $BC_2\to BO_n$ of the bundle $n\ga$ induces a corresponding fibration over $BC_2$ 
        \begin{equation}\label{f1}
        \bb RV_{n,ck}\xrightarrow{i} Y_{n, ck}\xrightarrow{p} BC_2,
        \end{equation}
where the total space $Y_{n, ck}$ is homotopy equivalent to the space $X$ (see \cite{GH68}), and henceforth we do not differentiate between $Y_{n, ck}$ and $X$.
Borel, in \cite{Bo53}, showed that the cohomology algebra of $\bb RV_{n,\ell}$ with $\bb Z_2$ coefficients is
                \[H^*(\bb RV_{n,\ell}; \bb Z_2) = V(z_{n-\ell},\dots, z_{n-1}).\]
Here $V(z_{n-\ell},\dots, z_{n-1})$ denotes a graded commutative, associative algebra with unit over $\bb Z_2$ and the classes $z_j$ have dimensions $j$, for $n-\ell\leq j\leq n-1$. The elements $1$ and $z_{i_1}z_{i_2}\cdots z_{i_p}; z-\ell\leq i_1<i_2<\cdots<i_p\leq n-1$ form a basis for $V(z_{n-\ell},\dots, z_{n-1})$ over $\bb Z_2$, satisfying $z_j^2=z_{2j}$ if $2j\leq n-1$ and $z_j^2=0$ otherwise. 
The action of the Steenrod algebra on $H^*(\bb RV_{n, \ell};\bb Z_2)$ is given by (see \cite{Bo53})
\begin{equation}\label{sqV}
Sq^j(z_q)=\binom{q}{j} z_{q+j} ~~~~~\textit{for $j\leq q.$} 
\end{equation} 
Let $X, c$ and $N_c$ be as defined before Theorem $\ref{main1}$. The mod two cohomology algebra of $X$ has the following description. 
\begin{theorem}\label{rpsm_cohomology}{\em (\cite[Theorem 2.1]{GH68}; \cite[Theorem 3.5]{BFG2023})} 
Suppose $ck<n$.  Let $x$ be the generator of the algebra $H^*(BC_2; \bb Z_2)$ of dimension 1. Then 
    \[H^*(X; \bb Z_2) = \bb Z_2[y]/\lra{y^{N_c}}\otimes V(y_{n-ck},\dots,\hat{y}_{N_c-1},\dots y_{n-1}),\]
    where $y=p^*(x)$ and $z_j=i^*({y_{j}})$ and $n-ck\leq j\leq n-1$.\hfill$\square$
\end{theorem}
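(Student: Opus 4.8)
The plan is to analyze the mod two Serre spectral sequence of the fibration (\ref{f1}), $\bb RV_{n,ck}\xrightarrow{i} X\xrightarrow{p} BC_2$. The base contributes $H^*(BC_2;\bb Z_2)=\bb Z_2[x]$, and the fibre contributes Borel's simple system $V(z_{n-ck},\dots,z_{n-1})$ recalled above. First I would verify that the action of $\pi_1(BC_2)=C_2$ on the mod two cohomology of the fibre is trivial, so that the local system is simple and $E_2^{*,*}=\bb Z_2[x]\otimes V(z_{n-ck},\dots,z_{n-1})$ as bigraded algebras. Because the spectral sequence is multiplicative and a class sitting in the fibre edge column $p=0$ can never be the target of a differential, the whole computation is governed by the transgressions of the multiplicative generators $z_j$.

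The decisive step is the transgression computation. Since (\ref{f1}) is pulled back from the universal fibration $\bb RV_{n,ck}\to BO_{n-ck}\to BO_n$ along a classifying map into $BO_n$, the transgression $\tau(z_j)\in H^{j+1}(BC_2;\bb Z_2)$ is the restriction of the universal Stiefel--Whitney class $w_{j+1}$. For $X=\bb RX_{n,k}$ the bundle is $n\ga$, so $w(n\ga)=(1+x)^n$ and $\tau(z_j)=\binom{n}{j+1}x^{j+1}$; for $X=FV_{n,2k}$ the analogous fibration of \cite{BFG2023} has total class $(1+x)^{-k}$, giving $\tau(z_j)=\binom{k+j}{j+1}x^{j+1}$. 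By the very definition of $N_c$, this coefficient is even for $n-ck\le j\le N_c-2$ and odd for $j=N_c-1$, so $z_{n-ck},\dots,z_{N_c-2}$ are permanent cycles and the first nontrivial differential is $d_{N_c}(z_{N_c-1})=x^{N_c}$. A short Lucas-theorem argument shows that $z_{N_c-1}$ is a genuine transgressive generator rather than a square $z_m^2$ of a surviving class, for which Leibniz in characteristic two would force the differential to vanish: if $N_c=n-ck+1$ then $z_{N_c-1}$ is the bottom generator, while if $N_c>n-ck+1$ then minimality forces $N_c$ to be even (for $c=1$, oddness of $\binom{n}{N_c}$ with $N_c$ odd would make $\binom{n}{N_c-1}$ odd too, contradicting minimality), so $z_{N_c-1}$ has odd degree. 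This single differential truncates the base to $\bb Z_2[x]/\lra{x^{N_c}}$, and for every remaining generator $z_j$ with $j\ge N_c$ the target $x^{j+1}$ of its potential transgression has already died on $E_{N_c+1}$; the Kudo transgression theorem together with (\ref{sqV}) confirms that the squares $z_j^2=z_{2j}$ impose no further differentials, so the sequence collapses with $E_\infty=\bb Z_2[x]/\lra{x^{N_c}}\otimes V(z_{n-ck},\dots,\hat z_{N_c-1},\dots,z_{n-1})$.

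Finally I would pass from $E_\infty$ to the ring $H^*(X;\bb Z_2)$. Setting $y=p^*(x)$, the relation $y^{N_c}=0$ is immediate from the filtration: the image of $y^{N_c}$ in $E_\infty^{N_c,0}=0$ vanishes and $F^{N_c+1}H^{N_c}(X)=0$, so $y^{N_c}=0$. Each surviving $z_j$ lifts through the edge homomorphism $i^*$ to a class $y_j\in H^j(X;\bb Z_2)$ with $i^*(y_j)=z_j$, and these together with $y$ generate. The hard part is the multiplicative extension problem: a priori the squaring relations $y_j^2=y_{2j}$ (for $2j\le n-1$) and $y_j^2=0$ could be perturbed by terms of higher filtration, i.e. by multiples of $y$, so one must show the lifts $y_j$ can be chosen to satisfy these relations exactly and that $H^*(X;\bb Z_2)$ is genuinely the tensor product $\bb Z_2[y]/\lra{y^{N_c}}\otimes V(y_{n-ck},\dots,\hat y_{N_c-1},\dots,y_{n-1})$ and not merely its associated graded. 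This is settled by a filtration-degree bookkeeping that uses the Steenrod action (\ref{sqV}) to normalize the lifts, and is precisely the content carried out in \cite{GH68} and \cite{BFG2023}.
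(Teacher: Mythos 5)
The paper does not prove this statement itself---it is quoted from \cite{GH68} and \cite{BFG2023} (note the $\square$ after the statement)---but the proof it does give for the quaternionic analogue (Theorem \ref{Quaternion_cohomology}) is exactly the Serre spectral sequence argument you outline: pull back from $\bb RV_{n,ck}\to BO_{n-ck}\to BO_n$, read off the transgressions from the total Stiefel--Whitney class of the classifying bundle, locate the first nonzero differential at $E_{N_c}$, and observe that everything collapses afterwards because all higher powers of $x$ die. Your proposal is correct and follows essentially the same route, with the appropriate extra care (squares in Borel's simple system, multiplicative extensions) that the real case demands.
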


Note that the class $y$ is the pullback of the class $x$, and hence $i^*(y)=0$.
Before proving Theorem \ref{main1}, we recall the upper characteristic rank of the projective Stiefel manifolds, which we will use on several occasions while proving Theorem \ref{main1}.
 \begin{theorem}{\em (\cite[Theorem 1.1]{KNT2012})}\label{KNT_main}
     Let $1<k<n$ when $\bb F=\bb R$ and $1<k\leq n$ when $\bb F=\bb C, \bb H$.\\
     (a) If $\bb F=\bb R$, then
     \[\mathrm{ucharrank}(\bb FV_{n, k})=\begin{cases}
         n-k-1 & \textit{if $n-k\neq 1, 2, 4, 8,$}\\
         2     & \textit{if $n-k= 1$ and $n\geq 4$,}\\
         2     & \textit{if $n-k= 2$,}\\
         4     & \textit{if $n-k= 4$ and $k=2$.}
     \end{cases}\]
     (b) If $k>2$ and $n-k=4$, then $\mathrm{ucharrank}(\bb RV_{n, k})\leq 4$.\\
     (c) If $n-k=8$, then $\mathrm{ucharrank}(\bb RV_{n, k})\leq 8$.\\
     (d) If $\bb F=\bb C$, then
     \[\mathrm{ucharrank}(\bb FV_{n, k})=\begin{cases}
         2     & \textit{if $k=n$,}\\
         2(n-k)     & \textit{if $k<n$.}\\
     \end{cases}\]
     (e) If $\bb F=\bb H$, then $\mathrm{ucharrank}(\bb FV_{n, k})= 4(n-k)+2$.\\
 \end{theorem}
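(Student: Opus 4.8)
The plan is to reduce everything, in all three cases, to deciding exactly when the bottom nonzero reduced cohomology class can be a Stiefel--Whitney class. Recall that $\bb RV_{n,k}$, $\bb CV_{n,k}$ and $\bb HV_{n,k}$ are $(d-1)$-connected with bottom cell $S^{d}$, where $d=n-k$, $2(n-k)+1$ and $4(n-k)+3$ respectively, and that the bottom--cell inclusion $j\colon S^{d}\hookrightarrow \bb FV_{n,k}$ induces an isomorphism $j^{*}\colon H^{d}(\bb FV_{n,k};\bb Z_2)\to H^{d}(S^{d};\bb Z_2)$. Since every Stiefel--Whitney class of a bundle vanishes below the first nonzero cohomology degree, the trivial bundle already gives $\mathrm{ucharrank}\ge d-1$, yielding the lower bounds $n-k-1$, $2(n-k)$ and $4(n-k)+2$. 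The whole question therefore collapses to realizability of the bottom class $u\in H^{d}$: if $\mathrm{ucharrank}>d-1$ then, because every product of lower classes is zero, $u$ must itself equal $w_{d}(\xi)$ for some bundle $\xi$.

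The crux is a single realizability criterion. If $u=w_{d}(\xi)$, restricting along $j$ gives a bundle $j^{*}\xi$ over $S^{d}$ with $w_{d}(j^{*}\xi)=j^{*}u\neq 0$. Now a real bundle over $S^{d}$ has nonzero top Stiefel--Whitney class (equivalently, odd Euler number when $d$ is even) if and only if $d\in\{1,2,4,8\}$: for odd $d\ge 3$ this is elementary, since over the simply connected $S^{d}$ every bundle is orientable and the Euler class of an oriented bundle over an odd sphere vanishes, while for even $d\notin\{2,4,8\}$ it is precisely Adams' Hopf--invariant--one theorem. For the complex and quaternionic manifolds with $k<n$ the degrees $d=2(n-k)+1$ and $d=4(n-k)+3$ are odd and $\ge 3$, so $u$ is never a Stiefel--Whitney class; with the trivial--bundle bound this forces $\mathrm{ucharrank}(\bb CV_{n,k})=2(n-k)$ and $\mathrm{ucharrank}(\bb HV_{n,k})=4(n-k)+2$. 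The only complex exception is $k=n$, where $d=1\in\{1,2,4,8\}$: here one realizes $w_1$ of a line bundle, $H^{2}(U(n);\bb Z_2)=0$, and the next indecomposable $y_3$ is obstructed, giving $\mathrm{ucharrank}=2$. For $\bb RV_{n,k}$ with $n-k\notin\{1,2,4,8\}$ the same criterion forbids realizing $z_{n-k}$, so $\mathrm{ucharrank}=n-k-1$.

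It remains to treat the exceptional real dimensions $n-k\in\{1,2,4,8\}$, where the bottom class is realizable and one must locate the first degree at which realization breaks down. For $n-k=1$ I would take the line bundle with $w_1=z_1$; then $z_2=z_1^{2}$ is automatically a polynomial in $w_1$, while the next indecomposable $z_3$ lies in degree $3\notin\{1,2,4,8\}$ and is obstructed by the sphere criterion, capping the rank at $2$ (for $n\ge 4$, so that $z_3$ exists). For $n-k\in\{2,4,8\}$ the cohomology vanishes below degree $n-k$, so it suffices to realize the single class $z_{n-k}$; I would construct a bundle with $w_{n-k}=z_{n-k}$ from the division--algebra structure on $\bb R^{n-k}$ (a Hopf--type bundle), after which $z_{n-k+1}$, sitting in a degree not in $\{1,2,4,8\}$, is blocked, so the rank equals $n-k$, i.e.\ $2$ or $4$. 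The sharp equality for $n-k=4,\,k=2$ reflects that the bottom cell then carries essentially all the relevant cohomology, allowing the Hopf bundle to extend globally; for $k>2$, and throughout the octonionic case $n-k=8$ (where non-associativity obstructs the global construction), this extension is not available, and one obtains only the upper bounds $\le 4$ and $\le 8$ of parts (b) and (c), the matching lower bound $n-k-1$ coming as before from the trivial bundle.

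The main obstacle is exactly this upper bound in the real case: ruling out a bundle with nonzero top Stiefel--Whitney class over the even spheres $S^{n-k}$ with $n-k\notin\{2,4,8\}$, which is the full force of Adams' solution to the Hopf invariant one problem. By contrast, the complex and quaternionic statements, the elementary odd--sphere vanishing, and the exceptional lower--bound constructions are comparatively formal once the bottom--cell reduction $j^{*}$ is in place; the only remaining delicacy is the global extendability of the Hopf bundle, which is precisely what separates the clean equalities from the mere bounds.
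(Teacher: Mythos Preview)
This theorem is not proved in the paper at all: it is quoted verbatim from \cite{KNT2012} and used as a black box in the subsequent lemmas. So there is no in-paper argument to compare against. That said, your overall strategy---reduce to realizability of the bottom class via the inclusion $j\colon S^{d}\hookrightarrow \bb FV_{n,k}$, then invoke the ``first nonzero Stiefel--Whitney class lies in a power-of-$2$ degree'' fact together with Adams' Hopf-invariant-one theorem---is exactly the one in \cite{KNT2012}, and your treatment of the generic cases ($n-k\notin\{1,2,4,8\}$ for $\bb R$; all $k<n$ for $\bb C,\bb H$) is correct in outline. One minor correction: your ``Euler class of an oriented bundle over an odd sphere vanishes'' argument only handles bundles of rank exactly $d$; for arbitrary rank you should argue directly that an odd $d\ge 3$ is not a power of $2$.

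There is, however, a genuine gap in your handling of the upper bounds in the exceptional real cases $n-k\in\{1,2,4,8\}$. You assert that $z_{n-k+1}$ ``is obstructed by the sphere criterion'' because $n-k+1\notin\{1,2,4,8\}$. But the power-of-$2$ criterion constrains only the \emph{first} nonzero Stiefel--Whitney class of a bundle. In these cases $w_{n-k}(\xi)$ may very well equal $z_{n-k}\neq 0$, and then nothing you have said rules out $w_{n-k+1}(\xi)=z_{n-k+1}$. Restriction to the bottom sphere $S^{n-k}$ is useless here, since $j^{*}z_{n-k+1}=0$. The correct argument is the Wu-formula computation that the present paper itself uses repeatedly in Lemmas~\ref{(a)}--\ref{(e)}: for $n-k\in\{2,4,8\}$ one has $w_i(\xi)=0$ for $i<n-k$, so Wu's formula gives $w_{n-k+1}(\xi)=Sq^{1}w_{n-k}(\xi)$, while on the other hand $Sq^{1}z_{n-k}=(n-k)\,z_{n-k+1}=0$ since $n-k$ is even. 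Hence $w_{n-k+1}(\xi)=0$ whether $w_{n-k}(\xi)$ is $z_{n-k}$ or $0$, and since every degree-$(n-k+1)$ monomial in the $w_i(\xi)$ reduces to $w_{n-k+1}(\xi)$, the class $z_{n-k+1}$ is unreachable. The case $n-k=1$ needs the same idea with $w_1$ tracked as well. Your sphere criterion covers only the sub-case $w_{n-k}(\xi)=0$, which is not enough.
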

We are now ready to prove Theorem \ref{main1}. We break the proof into the following five lemmata.
\begin{lemma}\label{(a)}
    Let $n - ck\neq 1,2,4$ or $8$, and $X$ be either $\bb RX_{n,k}$ or $FV_{n,2k}$. \vspace{-0.375cm}
          \begin{enumerate} 
              \item If $N_c=n-ck+1$, then $\mathrm{ucharrank}(X)\geq n-ck$, the equality holds if $n-ck$ is odd and $n-ck+1$ is not a power of 2, or $n-ck$ is even.
              \item If $N_c\neq n-ck+1$ then $\mathrm{ucharrank}(X)= n-ck-1$.
          \end{enumerate}
\end{lemma}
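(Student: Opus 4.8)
The starting point is to extract from Theorem \ref{rpsm_cohomology} the cohomology of $X$ in low degrees. Since $N_c\ge n-ck+1$, the truncated polynomial factor $\bb Z_2[y]/\langle y^{N_c}\rangle$ contributes $y^i$ in each degree $i\le n-ck$, while the exterior-type factor $V$ contributes nothing below degree $n-ck$ and its bottom generator $y_{n-ck}$ is present exactly when $n-ck\neq N_c-1$, that is, when $N_c\neq n-ck+1$. Hence for $i<n-ck$ one has $H^i(X;\bb Z_2)=\bb Z_2\langle y^i\rangle$, in degree $n-ck$ the class $y^{n-ck}$ always survives, and the extra indecomposable generator $y_{n-ck}$ appears precisely in case (2). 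This is the structural dichotomy driving the two parts of the lemma, and the only decomposable class in any degree $\le n-ck$ is a power of $y$.

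For the lower bounds I use the line bundle $\xi$ of the double cover, whose first Stiefel--Whitney class is $w_1(\xi)=p^*(x)=y$ by \eqref{f1}. In case (2) every group $H^i(X;\bb Z_2)$ with $i\le n-ck-1$ is spanned by $w_1(\xi)^i$, so $\mathrm{charrank}(\xi)\ge n-ck-1$; in case (1), where $y_{n-ck}$ is absent, the same holds through degree $n-ck$, giving $\mathrm{charrank}(\xi)\ge n-ck$. The upper bound in case (2) is then a restriction argument: the only indecomposable in degree $n-ck$ is $y_{n-ck}$, so any bundle $\eta$ whose Stiefel--Whitney subalgebra reaches degree $n-ck$ must have $w_{n-ck}(\eta)=y_{n-ck}+\epsilon\,y^{n-ck}$ with $w_i(\eta)\in\bb Z_2\langle y^i\rangle$ for $i<n-ck$. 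Applying $i^*$ and using $i^*(y)=0$, $i^*(y_{n-ck})=z_{n-ck}$ produces a bundle $i^*\eta$ over $\bb RV_{n,ck}$ with $w_{n-ck}(i^*\eta)=z_{n-ck}$, so $\mathrm{charrank}(i^*\eta)\ge n-ck$, contradicting $\mathrm{ucharrank}(\bb RV_{n,ck})=n-ck-1$ from Theorem \ref{KNT_main}(a) (valid as $n-ck\neq 1,2,4,8$). This forces $\mathrm{ucharrank}(X)=n-ck-1$.

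The equality in case (1) is the delicate part. Writing $m=n-ck$, the first potential obstruction sits at the indecomposable generator $y_{m+1}$, and since $y^{m+1}=y^{N_c}=0$ there are no decomposables in degree $m+1$. Assuming for contradiction a bundle $\eta$ with $w_{m+1}(\eta)=y_{m+1}$ and $w_i(\eta)\in\bb Z_2\langle y^i\rangle$ for $i\le m$, the restricted bundle $i^*\eta$ over $\bb RV_{n,ck}$ has $w_i(i^*\eta)=0$ for $1\le i\le m$ and $w_{m+1}(i^*\eta)=z_{m+1}$. Comparing the two evaluations of $Sq^i\big(w_{m+1}(i^*\eta)\big)$, namely $\binom{m+1}{i}z_{m+1+i}$ from \eqref{sqV} and $i^*$-naturality against $\binom{m}{i}w_{m+1+i}(i^*\eta)$ from Wu's formula (all mixed terms vanishing because $w_1(i^*\eta)=\cdots=w_m(i^*\eta)=0$), gives a contradiction as soon as some $i$ satisfies the binomial conditions $\binom{m+1}{i}$ odd and $\binom{m}{i}$ even together with the dimension condition $m+1+i\le n-1$. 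The binomial conditions amount, via Pascal's identity, to $\binom{m}{i-1}$ odd and $\binom{m}{i}$ even: when $m$ is even the choice $i=1$ satisfies them (and $ck\ge 3$ holds automatically, as $ck=2$ forces $m$ odd in case (1)), while for $m$ odd with $m+1$ not a power of $2$ the integer $m$ is not all-ones in binary and $i=2^b$, with $b$ the lowest zero bit of $m$, satisfies them.

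The main obstacle is precisely the odd case in which the smallest admissible index $i=2^b$ exceeds $ck-2$, so that $m+1+i>n-1$ and the fibre comparison degenerates ($z_{m+1+i}=0$): here one cannot pass to $\bb RV_{n,ck}$ but must compare $Sq^i(y_{m+1})$ with Wu's formula inside $H^*(X;\bb Z_2)$ itself, keeping the products $y^{\,i-t}\,w_{m+1+t}(\eta)$ and the $y$-divisible part of $Sq^i(y_{m+1})$, whose computation via the Cartan formula and the multiplicative structure of $H^*(X;\bb Z_2)$ is the crux of the argument. I expect this bookkeeping of $y$-products to be the decisive technical step, and I expect the power-of-two hypothesis to enter exactly as the boundary case: when $m+1=2^t$ the integer $m$ is all-ones in binary, $\binom{m}{i}$ is odd for every $0\le i\le m$, the obstruction vanishes identically, and only the lower bound $\mathrm{ucharrank}(X)\ge n-ck$ survives.
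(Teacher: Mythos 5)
Your lower bounds, your case (2) argument, and your treatment of $n-ck$ even all match the paper's proof in substance (the paper's even case applies $Sq^1$ to $w_{n-ck}(\beta)$ rather than to $w_{n-ck+1}(\beta)$, which avoids even the mild condition $z_{n-ck+2}\neq 0$, but both work). The problem is the case you yourself flag as "the delicate part": $m=n-ck$ odd with $m+1$ not a power of $2$. Your mechanism --- comparing $Sq^{2^b}(z_{m+1})=\binom{m+1}{2^b}z_{m+1+2^b}$ against Wu's formula $\binom{m}{2^b}w_{m+1+2^b}(i^*\eta)$ --- genuinely fails whenever $2^b>ck-2$, because then $z_{m+1+2^b}=0$ in $H^*(\bb RV_{n,ck};\bb Z_2)$ and both sides vanish. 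This is not a fringe case: e.g.\ $\bb RX_{7,2}$ has $m=5$, $b=1$, $ck=2$, and your fibre comparison says nothing. You do not close this gap; you only state that one "must" redo the computation in $H^*(X;\bb Z_2)$ keeping track of the $y$-divisible terms and that you "expect" this bookkeeping to work. As written, the proof of the equality in part (1) for $m$ odd is therefore incomplete.

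The paper avoids this entirely with a softer tool: the classical fact that if $w(\beta)\neq 1$ then the smallest $m>0$ with $w_m(\beta)\neq 0$ is a power of $2$. Since $H^j(\bb RV_{n,ck};\bb Z_2)=0$ for $0<j<n-ck$, the restricted bundle $\beta=i^*\alpha$ automatically has $w_j(\beta)=0$ in that range, so its first nontrivial Stiefel--Whitney class could only sit in degree $n-ck$ (impossible, as $n-ck$ is odd and $\neq 1$) or $n-ck+1$ (impossible by hypothesis, as it is not a power of $2$); hence $w_{n-ck+1}(\beta)=0$, contradicting $w_{n-ck+1}(\beta)=z_{n-ck+1}$. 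No dimension restriction on $ck$ enters, and no in-$X$ Cartan-formula bookkeeping is needed. I would recommend replacing your $Sq^{2^b}$ comparison by this power-of-two argument (which also subsumes your binary analysis of $m$: that analysis is essentially a hands-on proof of the special case of the classical fact that you need, minus the degenerate range).
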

\begin{proof}
    Let us assume that  $n-ck\neq 1, 2, 4$ or $8$. If $N_c=n-ck+1$, there is no degree $N_c-1$ class in the cohomology ring $H^*(X; \bb Z_2)$ which is obtained from $H^*(V_{n,ck}; \bb Z_2)$ under $i^*$, that is $y_{n-ck}=0$. Also, all the cohomology classes in $H^*(X; \bb Z_2)$ of degree less than $n-ck+1$ are obtained as the Stiefel-Whitney classes of $p^*\gamma$. Therefore, $\mathrm{ucharrank}(X)\ge n-ck$. 
    
    Now, let $n-ck$ be odd, and $n-ck+1$ be not a power of 2. We show that $\mathrm{ucharrank}(X)\le n-ck$, by proving that there cannot be any vector bundle $\alpha$ and polynomials $P_j; 1\leq j\leq N_c$ and $Q_{n-ck+1}$ in Stiefel-Whitney classes of $\alpha$ such that $P_j(w_1(\alpha), \ldots, w_j(\alpha))=y^j$, and $Q_{n-ck+1}(w_{1}(\alpha), \ldots, w_{n-ck+1}(\alpha)) = y_{n-ck+1}$. To the contrary, if such a vector bundle and such polynomials exist, let $\beta = i^*\alpha$. Then, $z_{n-ck+1} = i^*(y_{n-ck+1})=Q_{n-ck+1}(w_1(\beta), \ldots, w_{n-ck+1}(\beta))$. Since there is no non-trivial cohomology group of $V_{n, ck}$ in dimension less than $n-ck$ and $n-ck>1$, we must have $Q_{n-ck+1}(\beta) = a\cdot w_{n-ck+1}(\beta)$, where $a$ is a non-zero constant and hence $a=1$. Therefore, $w_{n-ck+1}(\beta)=z_{n-ck+1}$. We show that $\beta$ has the first non-trivial Stiefel-Whitney class in dimension $n-ck+1$. If not, the only possibility is that $w_{n-ck}(\beta)\neq 0$. Since $w(\beta)\neq 1$, it is well-known that the smallest positive $m$ such that  $w_m(\beta)\neq 0$ must be a power of two. But, $n-ck(\neq 1)$ is odd and hence, no vector bundle can have the first non-trivial Stiefel-Whitney class in dimension $n-ck$. Therefore, the first non-trivial Stiefel-Whitney class of $\beta$ occurs in dimension $n-ck+1$, as claimed. Now, since $n-ck+1$ is not a power of 2, no vector bundle can have the first non-trivial Stiefel-Whitney class in dimension $n-ck+1$, that is $z_{n-ck+1}=w_{n-ck+1}(\beta)$ must be 0, a contradiction. Therefore, $\mathrm{ucharrank}(X)$ must be $n-ck$. 
    
     Let $n-ck$ be even. We show that if $\mathrm{ucharrank}(X)> n-ck$, then there must be a vector bundle $\beta$, over $V_{n, ck}$ with $w_{n-ck}(\beta)\neq 0$ which will contradict that $\mathrm{ucharrank}(\bb RV_{n, ck}=n-ck+1$. Following exactly the same arguments as above, there is a vector bundle $\beta$ over $V_{n,k}$ such that $w_{n-ck+1}(\beta)\neq 0$. If $w_{n-ck}(\beta) = 0$, then by Wu's formula, $0=Sq^1(w_{n-ck}(\beta))=(n-ck-1)w_{n-ck+1}(\beta)=w_{n-ck+1}(\beta)$, a contradiction. Therefore, $w_{n-ck}(\beta)$ must be $z_{n-ck}\neq 0$, as claimed. Hence, we have $\mathrm{ucharrank}(X)=n-ck$. This proves (1).
    
    

     Let us now assume $N_c> n-ck+1$. As above, if there exist a vector bundle $\alpha$ over $X$ and polynomials $P_j; 1\leq j\leq n-ck$ such that $P_j(w_1(\alpha), \ldots, w_{j}(\alpha))=y^j$ and $Q_{n-ck}(w_{1}(\alpha), \ldots, w_{n-ck}(\alpha)) = y_{n-ck}$, then $z_{n-ck} = i^*(y_{n-ck})=Q_{n-ck}(w_{1}(\beta), \ldots, w_{n-ck}(\beta))$. This implies that $\mathrm{ucharrank}(\bb RV_{n, ck})\ge n-ck$, which is a contradiction as $\mathrm{ucharrank}(\bb RV_{n, ck}) = n-ck-1$ by Theorem \ref{KNT_main}. Therefore, $\mathrm{ucharrank}(X)\leq  n-ck-1$. Also, all the cohomology classes in $H^*(X; \bb Z_2)$ of degree less than $n-ck$ are obtained as the Stiefel-Whitney classes of $p^*\gamma$. Hence, $\mathrm{ucharrank}(X)= n-ck-1$ in this case. This completes the proof.
     \end{proof}

    

\begin{lemma}\label{(b)}
      Let $n - ck = 1$.\vspace{-0.375cm}
          \begin{enumerate}
              \item If $n\equiv 2, 3\pmod 4$ then $\mathrm{ucharrank}(\bb RX_{n,k})=2$. 
              \item If $n\equiv 0,2\pmod 4$ then $\mathrm{ucharrank}(FV_{n, 2k})\geq 2$.
              \item The $\mathrm{ucharrank}(X)= 0$ otherwise.
          \end{enumerate}
\end{lemma}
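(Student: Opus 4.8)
The plan is to extract everything from the bottom of $H^*(X;\bb Z_2)$. Since $n-ck=1$, the fibre of the fibration $\bb RV_{n,ck}\to X\to BC_2$ is $\bb RV_{n,n-1}$, whose cohomology is $V(z_1,\dots,z_{n-1})$, and $N_c\ge n-ck+1=2$. By Theorem~\ref{rpsm_cohomology}, $H^1(X;\bb Z_2)$ is spanned by $y=w_1(\xi)$ and by $y_1$, where $y_1$ survives exactly when $N_c\ne 2$ (the deleted generator being $y_{N_c-1}$). So the whole lemma is governed by the dichotomy $N_c=2$ versus $N_c\ge 3$. Part (3) is then immediate: if $N_c\ge 3$ then $\dim_{\bb Z_2}H^1(X)=2$, whereas any single bundle $\alpha$ contributes only the one degree-one class $w_1(\alpha)$, so its degree-one Stiefel--Whitney polynomials span at most a line and cannot generate $H^1(X)$; hence $\mathrm{ucharrank}(X)=0$. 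Converting $N_c=2$ via Lucas' theorem yields the congruences in the statement: for $c=1$ one has $N_1=2\iff\binom n2\text{ odd}\iff n\equiv 2,3\pmod 4$ (so the complementary range $n\equiv 0,1\pmod 4$ is exactly the ``otherwise'' of (3)), and the analogous computation with $\binom{k+1}{2}$ handles $c=2$.

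Now suppose $N_c=2$, so $H^1(X)=\langle y\rangle$ and, because $y^2=0$, $H^2(X)=\langle y_2\rangle$ is one-dimensional. For the lower bounds in (1) and (2) I must exhibit a single bundle $\alpha$ with $w_1(\alpha)=y$ and $w_2(\alpha)=y_2$; this is forced, since the only degree-two Stiefel--Whitney polynomials are $w_2$ and $w_1^2=y^2=0$. To construct it I would first locate $Sq^1y_2$: as $i^*$ is a ring map commuting with $Sq^1$ and $i^*y_2=z_2$, formula (\ref{sqV}) gives $i^*(Sq^1y_2)=Sq^1z_2=\binom 21 z_3=0$; and since (for $n\ge 4$) $i^*y_3=z_3\ne 0$ while $i^*(yy_2)=0$, the kernel of $i^*$ on $H^3(X)$ is exactly $\langle yy_2\rangle$, whence $Sq^1y_2\in\langle yy_2\rangle$. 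Thus the Wu relation $Sq^1w_2=w_1w_2+w_3$ is consistent with $(w_1,w_2)=(y,y_2)$, and $\alpha$ can be taken as $\alpha_0\oplus\xi$ with $\alpha_0$ an oriented $2$-plane bundle with $w_2(\alpha_0)=y_2$ when $Sq^1y_2=0$, or as a non-orientable $2$-plane bundle with $(w_1,w_2)=(y,y_2)$ when $Sq^1y_2=yy_2$. Either way $w(\alpha)=1+y+y_2+\cdots$ generates $H^{\le 2}(X)$, giving $\mathrm{ucharrank}(X)\ge 2$.

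For the equality in (1) I would prove $\mathrm{ucharrank}(\bb RX_{n,k})\le 2$ (for $n\ge 4$) by contradiction. If some $\alpha$ generated $H^{\le 3}(X)$ then, exactly as above, $w_1(\alpha)=y$ and $w_2(\alpha)=y_2$, so by Wu $w_3(\alpha)=Sq^1y_2+yy_2\in\langle yy_2\rangle$. Every degree-three polynomial in the Stiefel--Whitney classes of $\alpha$---namely $w_3(\alpha)$, $w_1w_2=yy_2$, and $w_1^3=0$---then lies in $\langle yy_2\rangle$, so the basis class $y_3\in H^3(X)$ (nonzero for $n\ge 4$, with $i^*y_3=z_3\ne 0$) can never be reached, a contradiction; hence $\mathrm{ucharrank}(\bb RX_{n,k})=2$. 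The identical restriction argument bounds $Sq^1y_2$ for $FV_{n,2k}$, but the Steenrod action on $H^*(FV_{n,2k};\bb Z_2)$ is less explicit, which is why (2) asserts only $\ge 2$.

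The one genuinely technical point is the construction in the second paragraph: upgrading the cohomological Wu-compatibility of $(y,y_2)$ to an \emph{actual} bundle with $w_2=y_2$. It cannot be carried out with bundles pulled back from $\bb RP^{n-1}$ or with the sub- and quotient bundles of $n\xi$ coming from the frame splitting, since all of their Stiefel--Whitney classes already lie in the subalgebra $\bb Z_2[y]/\langle y^{N_c}\rangle$ and therefore miss $y_2$ entirely; the realizing bundle must genuinely involve the fibre direction, and proving its existence---via an integral lift of $y_2$ or obstruction theory over $\bb RV_{n,ck}\to X\to BC_2$---is where the real work lies. By contrast the degree count for (3) and the Wu-plus-restriction computation for the upper bound are routine.
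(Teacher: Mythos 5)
Your part~(3) and your upper bound in part~(1) are essentially the paper's own arguments (a dimension count on $H^1(X;\bb Z_2)$ when $N_c\neq 2$; Wu's formula together with $Sq^1z_2=\binom{2}{1}z_3=0$ to show every degree-three Stiefel--Whitney polynomial of a candidate bundle lies in $\langle yy_2\rangle$ and so misses $y_3$), and those parts are fine. The genuine gap is exactly where you flag it: the lower bound $\mathrm{ucharrank}(X)\ge 2$, i.e.\ the existence of a bundle with $w_2=y_2$. Checking that the Wu relation $Sq^1w_2=w_1w_2+w_3$ is \emph{consistent} with $(w_1,w_2)=(y,y_2)$ is only a necessary condition; it does not produce a bundle, and without that bundle neither the inequality in (2) nor the equality in (1) is established. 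Saying ``$\alpha$ can be taken as $\alpha_0\oplus\xi$ with $w_2(\alpha_0)=y_2$'' presupposes precisely what has to be proved.

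The paper closes this gap by the route you name but do not execute: it produces an integral lift of $y_2$. Concretely, from the homotopy exact sequence of $\bb RV_{ck+1,ck}\to X\to BC_2$ one gets an extension $0\to\bb Z_2\to\pi_1(X)\to\bb Z_2\to 0$; since $H^1(X;\bb Z_2)\cong\bb Z_2$ rules out $\bb Z_2\oplus\bb Z_2$, one has $\pi_1(X)\cong\bb Z_4$. Then $\pi_2(X)\cong\pi_2(\bb RV_{ck+1,ck})=0$ and $H_2(B\bb Z_4;\bb Z)=0$ give $H_2(X;\bb Z)=0$, hence $H^2(X;\bb Z)\cong\bb Z_4$, and the Bockstein sequence
\[
0\to H^1(X;\bb Z_2)\xrightarrow{\delta} H^2(X;\bb Z)\xrightarrow{\times 2} H^2(X;\bb Z)\xrightarrow{\rho_2} H^2(X;\bb Z_2)
\]
reads $0\to\bb Z_2\to\bb Z_4\to\bb Z_4\to\bb Z_2$, forcing $\rho_2$ to be onto. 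Thus $y_2$ is the mod~$2$ reduction of a first Chern class, i.e.\ $y_2=w_2(\beta)$ for an oriented $2$-plane bundle $\beta$, and $p^*\gamma\oplus\beta$ gives $\mathrm{ucharrank}(X)\ge 2$. (Incidentally, this shows $Sq^1y_2=0$, so your case distinction on $Sq^1y_2$ is not needed.) Until you supply this computation, or an equivalent obstruction-theoretic argument, the proof of the lower bound is incomplete.
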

     \begin{proof}
    Let $n-ck$ be $1$. Then $\mathrm{ucharrank}(\bb RV_{n, ck})=2$. If $N_c\neq 2$, then $H^1(X; \bb Z_2)\cong \bb Z_2\oplus \bb Z_2$, and hence the upper characteristic rank must be zero. Therefore, we assume $N_c= 2$, that is,  the class $y_1$ is trivial in the cohomology of $X$. Note that $N_c=2$ if and only if $n\equiv 2, 3\pmod 4$ for $\bb RX_{n,k}$ and $n\equiv 0, 2\pmod 4$ for $FV_{n, 2k}$.  We write $\bb RV_{ck+1, ck}$ for $\bb RV_{n,ck}$.
    
    Now, for $N_c=2$, as $H^1(X;\bb Z_2)$ is generated by the Stiefel-Whitney class of $p^*\ga$, the $\mathrm{ucharrank}(X)\geq 1$. We show that the homomorphism $\rho_2\colon H^2(X;\bb Z)\to H^2(X;\bb Z_2)$ is surjective. Since the elements of $H^2(X; \bb Z)$ are in one-to-one correspondence with the Chern class of complex line bundles over $X$, this will prove that $y_2$ can be obtained as the second Stefel-Whitney class of some orientable 2-plane bundle $\beta$ over $X$. Hence, all the classes in $H^2(X; \bb Z_2)$ can be obtained as polynomials in the Stiefel-Whitney classes of the bundle $p^*(\ga)\oplus \beta$. Therefore, $\mathrm{ucharrank}(X)\geq 2$.
    
    Consider the homotopy long exact sequence induced by $\bb RV_{ck+1,ck}\xrightarrow{i} X\xrightarrow{p} \bb RP^{\infty}$. Since $\pi_j(\bb RP^\infty)=0; j>1$, hence $\pi_j(X)\cong \pi_j(\bb RV_{ck+1, ck}); j>1$. Since, $\pi_1(\bb RV_{ck+1, ck})=\bb Z_2$ we have the following exact sequence
                \[0\to \bb Z_2\to \pi_1(X)\to \bb Z_2\to 0.\]
                        
    First, we show that $\pi_1(X)=\bb Z_4 $. If $\pi_1(X)=\bb Z_2\oplus \bb Z_2$, then $H^1(X; \bb Z)=\bb Z_2\oplus \bb Z_2$, which contradicts that $H^1(X; \bb Z_2)$ is $\bb Z_2$ (see Theorem \ref{rpsm_cohomology}). Therefore, $\pi_1(X) = \bb Z_4 $. Since, $H_2(B\bb Z_4, \bb Z)=0$ and $\pi_2(X)\cong \pi_2(\bb RV_{ck+1, ck})\cong 0$, the right exact sequence $\pi_2(X)\to H_2(X; \bb Z)\to H_2(\pi_1(X), \bb Z)\to 0$ (see \cite[(0.1)]{brown2012cohomology}) yields $H_2(X; \bb Z)\cong \pi_2(X)\cong 0$, and hence $H^2(X; \bb Z)$ is $\bb Z_4 $. Therefore, the Bockstein long exact sequence
\[ 0\xrightarrow{} H^1(X; \bb Z_2)\xrightarrow{\delta} H^2(X; \bb Z)\to H^2(X; \bb Z)\xrightarrow{\rho_2} H^2(X; \bb Z_2)\rightarrow\cdots\]

becomes
            \[0\to\bb Z_2\to \bb Z_4 \to \bb Z_4 \to\bb Z_2\to\cdots.\]

Since $\delta$ is injective, $\rho_2$ must be surjective.

Finally, we show that $\mathrm{ucharrank}(\bb RX_{n+1, n})< 3$. Already we have a bundle, namely $p^*(\ga)\oplus \beta$, such that $w_3(p^*(\ga)\oplus \beta)=yy_2$. We need to show that there cannot be any vector bundle $\alpha$ and polynomials $P_2$ and $P_3$ such that 
\begin{align*}
    w_1(\alpha) & = y,\\
    P_2(w_1(\alpha), w_2(\alpha))&=y_2,\\
    P_3(w_1(\alpha), w_2(\alpha), w_3(\alpha))&= y_3.
\end{align*} 
Since, $y^2=0$, we get $P_2(w_1(\alpha), w_2(\alpha))=w_2(\alpha)=y_2$, and hence, $w_2(i^*\alpha)=z_2$ and $P_3(w_1(i^*\alpha), z_2, w_3(i^*(\alpha))=z_3$. Since $\mathrm{ucharrank}(\bb RV_{n, ck})=2$, we must have $w_1(i^*\alpha)=0$. Now, using Wu's formula and $Sq^1(z_2)= 2z_3$, we have $w_3(i^*\alpha) = w_1(i^*\alpha)w_2(i^*\alpha)+w_3(i^*\alpha)= Sq^1(w_2(i^*\alpha))=Sq^1(z_2)=2z_3=0$.
 Therefore, $P_3(0, w_2(i^*\alpha), 0)=z_3$, which is impossible as the left side is even-dimensional while the right side is odd. Therefore, $\mathrm{ucharrank}(\bb RX_{n+1, n}) = 2$.
\end{proof}

\begin{lemma}\label{(c)}
    Let $n-ck = 2$.\vspace{-0.375cm}
          \begin{enumerate}
              \item If $N_c\neq 4$, then $1\le\mathrm{ucharrank}(X)\le 2$. Further, if $N_c=3$ then $\mathrm{ucharrank}(X)=2$.
              \item If $N_c=4$, then $1\le \mathrm{ucharrank}(X)\le 4$.
          \end{enumerate}
\end{lemma}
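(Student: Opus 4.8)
My plan is to run, in every case, the fibre-restriction argument already used in Lemma \ref{(a)}: for a bundle $\alpha$ over $X$ I set $\beta=i^*\alpha$ over $\bb RV_{n,ck}$ and use $i^*(y)=0$, $i^*(y_j)=z_j$ (Theorem \ref{rpsm_cohomology}) together with $\mathrm{ucharrank}(\bb RV_{n,ck})=2$, which holds since $n-ck=2$ (Theorem \ref{KNT_main}). Because the lowest generator of $H^*(\bb RV_{n,ck};\bb Z_2)$ sits in degree $n-ck=2$, we have $H^1(X;\bb Z_2)=\lra{y}$ with $y=w_1(p^*\ga)$, so the single bundle $p^*\ga$ already gives $\mathrm{ucharrank}(X)\ge 1$ throughout; note also $N_c\ge n-ck+1=3$.

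For part (1), where $N_c\ne 4$, the generator $y_3$ survives in $H^3(X;\bb Z_2)$. I will assume $\mathrm{charrank}(\alpha)\ge 3$ and derive a contradiction. First $w_1(\alpha)=y$ (the unique degree-one class), hence $w_1(\beta)=0$; writing $y_3=Q_3(w_1(\alpha),w_2(\alpha),w_3(\alpha))$ and restricting, the only degree-three monomial not killed by $w_1(\beta)=0$ is $w_3$, so $w_3(\beta)=z_3$. On the other hand $w_2(\beta)\in H^2(\bb RV_{n,ck};\bb Z_2)=\lra{z_2}$ forces, via \eqref{sqV}, $Sq^1 w_2(\beta)=\binom{2}{1}z_3=0$, while Wu's formula gives $Sq^1 w_2(\beta)=w_1(\beta)w_2(\beta)+w_3(\beta)=z_3\ne 0$, a contradiction. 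Hence $\mathrm{ucharrank}(X)\le 2$. When $N_c=3$ the class $y_2$ is deleted, so $H^2(X;\bb Z_2)=\lra{y^2}=\lra{w_1(p^*\ga)^2}$ and $\mathrm{charrank}(p^*\ga)\ge 2$; combined with the bound just proved this gives $\mathrm{ucharrank}(X)=2$.

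For part (2), where $N_c=4$, the generator $y_3$ is deleted and the first new algebra generator above degree two is $y_5$ (since $y_4=y_2^2$). The bound $\ge 1$ is as before. For the upper bound I will assume $\mathrm{charrank}(\alpha)\ge 5$ and restrict. Expressibility of $y_2$ forces $w_2(\beta)=z_2$, and then Wu's formula with $w_1(\beta)=0$ gives $w_3(\beta)=Sq^1 w_2(\beta)=Sq^1 z_2=0$; the degree-four class $y_4=y_2^2$ contributes nothing new. Expressibility of $y_5$ next forces $z_5=w_5(\beta)$, since $w_1(\beta)=w_3(\beta)=0$ annihilates every other degree-five monomial. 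Finally $w_4(\beta)\in H^4(\bb RV_{n,ck};\bb Z_2)=\lra{z_4}=\lra{z_2^2}$ gives $Sq^1 w_4(\beta)=\binom{4}{1}z_5=0$ by \eqref{sqV}, whereas Wu's formula gives $Sq^1 w_4(\beta)=w_5(\beta)=z_5\ne 0$, the required contradiction, so $\mathrm{ucharrank}(X)\le 4$.

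The one step carrying real risk is this last contradiction: it uses the generator $z_5$ and hence needs $n-1\ge 5$. For every flip manifold falling under $N_c=4$ this holds (the smallest being $n=6$), and it holds for all real cases except the two smallest. When $z_5$ is absent --- the real cases $\bb RX_{n,k}$ with $n-k=2$, $N_c=4$ and $n\le 5$ --- the algebra $H^*(X;\bb Z_2)$ is generated by $y$ and $y_2$ alone, so the whole problem collapses to deciding whether $y_2$ is realizable as a Stiefel--Whitney class. I expect these finitely many spaces to demand a separate, direct obstruction-theoretic analysis, and this is exactly where the main difficulty of the lemma lies; the generic case, by contrast, is a routine variant of Lemma \ref{(a)}.
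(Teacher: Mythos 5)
Your argument is essentially the paper's: restrict a hypothetical bundle $\alpha$ along $i^*$ to $\bb RV_{n,ck}$, use $i^*(y)=0$ to kill all monomials involving $w_1(\beta)$, and then play Wu's formula $Sq^1w_{2j}=w_1w_{2j}+w_{2j+1}$ against the Steenrod action \eqref{sqV} to force $w_3(\beta)=0$ (resp.\ $w_5(\beta)=0$), contradicting $w_3(\beta)=z_3$ (resp.\ $w_5(\beta)=z_5$). Your treatment is in fact a little tighter in two spots: in part (1) the paper first argues $w_2(i^*\alpha)=0$ by appealing to $\ucr(\bb RV_{n,ck})=2$, whereas you dispose of both possibilities $w_2(\beta)\in\{0,z_2\}$ at once via $Sq^1z_2=\binom{2}{1}z_3=0$; and in part (2) you justify why only the monomial $w_5$ survives in $Q_5$ after $w_1(\beta)=w_3(\beta)=0$, which the paper compresses into ``$w_5(\alpha)=y_5$.''

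The caveat you raise at the end is a genuine one, and you should know that the paper's own proof of part (2) is silent on it: the published argument also begins ``there must be a vector bundle $\alpha$ with $w_4(\alpha)=y_4$ and $w_5(\alpha)=y_5$'' and then contradicts $w_5(i^*\alpha)=z_5$, which presupposes that $z_5$ (equivalently $y_5$) exists, i.e.\ $n\ge 6$. For $\bb RX_{4,2}$ and $\bb RX_{5,3}$ (both of which have $N_1=4$) the algebra $H^*(X;\bb Z_2)$ is generated by $y$ and $y_2$ alone, so if $y_2$ were realizable as a Stiefel--Whitney class the upper characteristic rank would jump to $\dim X$, and neither your argument nor the paper's rules this out; if it is not realizable, $\ucr(X)=1$ and the stated bounds hold trivially. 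So you have not missed an idea present in the paper --- you have identified a case the paper's proof does not cover either --- but as written your proof, like the paper's, establishes the lemma only for $n\ge 6$ in the real case, and the two small cases would indeed need the separate analysis you anticipate.
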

\begin{proof}
    To begin with, note that if $n-ck=2$, then we have $\mathrm{ucharrank}(\bb RV_{n, ck})=2$ and $N_c\ge 3$. Also, as $n-k=2$ and the only cohomology class in $H^1(X; \bb Z_2)$ is $w_1(p^*\ga)$, the $\mathrm{ucharrank}(X)$ is always at least one. 
    
    Let us assume $N_c\neq 4$. We show $\mathrm{ucharrank}(X)<3$. If not, there must be a bundle $\alpha$ over $X$ with $w_3(\alpha) = y_3$. This is because it is clear from Theorem \ref{rpsm_cohomology} that $y_3$ is not a polynomial combination of $y$ and $y_2$.  Then, $w_3(i^*\alpha)=z_3$, and as $\mathrm{ucharrank}(\bb RV_{n, ck}) =2$, $w_2(i^*\alpha)=0$. Therefore, $0=sq^1(w_2(i^*\alpha))=w_3(\alpha)=z_3$, a contradiction. Hence, if $N_c\neq 4$, then $\mathrm{ucharrank}(X)\le2$.

    If $N_c=3$, then $H^i(X; \bb Z_2)\cong \bb Z_2\lra{y^i}; i=1, 2$ and, therefore, $\mathrm{ucharrank}(X)\geq 2$. This completes the proof of $(1)$. 

Now, let $N_c =4$. If the $\mathrm{ucharrank}(X)\geq 5$, it follows from Theorem \ref{rpsm_cohomology} that there must be a vector bundle $\alpha$ over $X$ such that $w_4(\alpha)= y_4$ and $w_5(\alpha)=y_5$. Then $w_5(i^*\alpha)= z_5$. Now,  by the relations \ref{sqV},
\[z_5=w_5(i^*\alpha)=w_1(i^*\alpha)w_4(i^*\alpha)+sq^1(w_4(i^*\alpha))=sq^1(z_4)=0.\] This is a contradiction, and hence $\mathrm{ucharrank}(X)\leq 4$. This completes the proof.
\end{proof}
\begin{corollary}
    If $n-k=2$, then $\mathrm{ucharrank}(\bb RX_{n, k})=2$ for $n\equiv 3\pmod 4$ and $\mathrm{ucharrank}(FV_{n, k})=2$ if $n\equiv 0\pmod 4$.
\end{corollary}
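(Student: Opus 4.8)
The plan is to read off the Corollary directly from Lemma~\ref{(c)} by determining, in each of the two stated congruence cases, the precise value of $N_c$ and then invoking the appropriate clause of that lemma. Recall that $n-k=2$ forces $n-ck=2$ (with $c=1$ for $\bb RX_{n,k}$ and $c=2$ for $FV_{n,k}$ after adjusting $k$), so Lemma~\ref{(c)} applies and gives $\mathrm{ucharrank}(X)=2$ as soon as we can show $N_c=3$. Thus the entire proof reduces to a binomial-coefficient parity computation.

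First I would handle the real case $\bb RX_{n,k}$ with $n\equiv 3\pmod 4$. Here $c=1$, so by definition $N_1=\min\{j\mid n-k+1\le j\le n,\ \binom{n}{j}\text{ odd}\}$, and since $n-k=2$ the range is $j\in\{n-1,n\}$. Because $\binom{n}{n}=1$ is always odd we certainly have $N_1\le n$; the question is whether $\binom{n}{n-1}=\binom{n}{1}=n$ is odd. For $n\equiv 3\pmod 4$, $n$ is odd, so $\binom{n}{n-1}=n$ is odd, giving $N_1=n-1=n-k+1-1+k$. More usefully, in the reindexed coordinates of Lemma~\ref{(c)} the relevant smallest index corresponds to $N_c=3$; concretely $n-k=2$ means the lemma's parameter $N_c$ equals $3$ exactly when the lowest Stiefel--Whitney class $z_{n-ck}=z_{n-k}$ survives, i.e. when $y_{n-ck}\ne0$, which is governed by whether $\binom{n}{n-ck+1}$ is odd. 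So I would verify that for $n\equiv3\pmod4$ this binomial coefficient is odd, placing us in the $N_c=3$ branch of Lemma~\ref{(c)}(1) and yielding $\mathrm{ucharrank}(\bb RX_{n,k})=2$.

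Next I would treat the flip case $FV_{n,k}$ with $n\equiv 0\pmod 4$, where $c=2$ and $N_2=\min\{j\mid n-2k+1\le j\le n,\ \binom{k+j-1}{j}\text{ odd}\}$. With $n-2k=2$ one again checks which of the two smallest admissible indices first gives an odd coefficient of the form $\binom{k+j-1}{j}$; the claim is that $n\equiv0\pmod4$ makes the relevant coefficient odd so that $N_2=3$, again landing in Lemma~\ref{(c)}(1). I would carry out this Lucas-theorem parity check explicitly, using that $n\equiv0\pmod4$ controls the base-$2$ expansions of $k+j-1$ and $j$ in the binomials defining $N_2$.

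The main obstacle, and really the only substantive content, is the parity bookkeeping: one must confirm that the stated congruence conditions on $n$ translate \emph{exactly} into $N_c=3$ (and not $N_c=4$ or larger), since only the $N_c=3$ clause of Lemma~\ref{(c)} forces the sharp value $2$ rather than merely the bound $1\le\mathrm{ucharrank}(X)\le2$. The cleanest tool here is Lucas' theorem applied to $\binom{n}{n-k+1}=\binom{n}{1}=n$ in the real case and to the shifted coefficient $\binom{k+j-1}{j}$ in the flip case; once these parities are pinned down the corollary is immediate from Lemma~\ref{(c)}, with no further geometry or cohomology required.
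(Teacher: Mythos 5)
Your overall strategy is exactly the paper's: the proof is a one-line reduction to Lemma~\ref{(c)}(1) via the claim that the stated congruences force $N_c=3$. But the parity bookkeeping, which you yourself identify as ``really the only substantive content,'' is carried out incorrectly in the real case and not carried out at all in the flip case. With $n-k=2$ the index $j$ in the definition of $N_1$ ranges over $n-k+1\le j\le n$, i.e.\ over $\{3,4,\dots,n\}$, not over $\{n-1,n\}$ as you assert; consequently the coefficient governing whether $N_1=3$ is $\binom{n}{3}$, not $\binom{n}{n-1}=\binom{n}{1}=n$. This matters: $\binom{n}{1}=n$ is odd for all odd $n$, which would put every $n\equiv 1\pmod 4$ into the $N_c=3$ branch as well, contradicting the corollary's restriction to $n\equiv 3\pmod 4$. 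The correct check is that $\binom{n}{3}$ is odd iff (by Lucas) the last two binary digits of $n$ are both $1$, i.e.\ iff $n\equiv 3\pmod 4$; your statement in the middle of the argument that one should test $\binom{n}{n-ck+1}$ happens to point at the right coefficient since $n-ck+1=3$, but your explicit identification of it with $\binom{n}{1}$ in the closing paragraph is wrong, and the two versions of your computation give different answers. (A smaller slip: $N_c=3=n-ck+1$ means $y_{N_c-1}=y_{n-ck}$ is \emph{omitted} from the cohomology, not that it ``survives.'')

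For the flip case you only promise a Lucas computation without doing it, so nothing is actually verified there; the relevant coefficient is $\binom{k+2}{3}$ with $n=2k+2$, and you would need to check its parity against the condition $n\equiv 0\pmod 4$ to complete the argument. Since the entire content of the corollary is precisely these two parity determinations, and the one you attempt is based on a misread index range, the proposal as written does not establish the statement, even though the reduction to Lemma~\ref{(c)}(1) is the right (and the paper's) route.
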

\begin{proof}
     Since $N_c=3$ if $n=4m+3$ for $c=1$ and $n=4m$ for $c=2$, the corollary follows from Lemma \ref{(c)} (1).
\end{proof}
\begin{lemma}\label{(d)}
    Let $n-ck = 4$.\vspace{-0.375cm}
          \begin{enumerate}
              \item If $N_c\neq 6$, then $3\leq \mathrm{ucharrank}(X)\leq 4$. Further, if $N_c=5$, then $\mathrm{ucharrank}(X)=4$.
              \item If $N_c=6$, then $3\leq \mathrm{ucharrank}(X)\leq 6$.
          \end{enumerate}
\end{lemma}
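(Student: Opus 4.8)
The plan is to prove the two-sided bounds by combining the cohomology description of Theorem~\ref{rpsm_cohomology} with restriction along the fibre inclusion $i\colon \bb RV_{n,ck}\to X$ of \eqref{f1} and Wu's formula, exactly as in Lemmas~\ref{(a)}--\ref{(c)}. Throughout, for a bundle $\alpha$ on $X$ I write $\beta=i^*\alpha$ and use that $H^j(\bb RV_{n,ck};\bb Z_2)=0$ for $0<j<n-ck=4$, so that $w_1(\beta)=w_2(\beta)=w_3(\beta)=0$ and $w_j(\beta)\in\langle z_j\rangle$ for $4\le j\le 7$. For the lower bound, since $N_c\ge n-ck+1=5$ the classes $y,y^2,y^3$ exhaust $H^1,H^2,H^3$ and are the powers of $w_1(p^*\gamma)$, whence $\mathrm{charrank}(p^*\gamma)\ge 3$ and $\mathrm{ucharrank}(X)\ge 3$ in both parts. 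When $N_c=5$ the generator $y_4=y_{N_c-1}$ is omitted, so $H^4(X)=\langle y^4\rangle$ is again a power of $w_1(p^*\gamma)$; this upgrades the bound to $\mathrm{charrank}(p^*\gamma)\ge 4$, i.e.\ $\mathrm{ucharrank}(X)\ge 4$.

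For part (1) I would show $\mathrm{ucharrank}(X)\le 4$ when $N_c\ne 6$. Suppose some $\alpha$ has $\mathrm{charrank}(\alpha)\ge 5$. Because $N_c\ne 6$, the class $y_5$ is a genuine generator of $H^5(X)$, hence a polynomial in the $w_j(\alpha)$; restricting and using $w_1(\beta)=w_2(\beta)=w_3(\beta)=0$ forces $w_5(\beta)=z_5$, while $w_4(\beta)=\epsilon z_4$. Wu's formula then gives $w_5(\beta)=Sq^1 w_4(\beta)+w_1(\beta)w_4(\beta)=\epsilon\,Sq^1 z_4=\epsilon\binom{4}{1}z_5=0$, contradicting $z_5\ne 0$. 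Combined with the lower bounds this yields $3\le\mathrm{ucharrank}(X)\le 4$, with equality $4$ when $N_c=5$.

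For part (2), $N_c=6$: here $y_5$ is the omitted generator, so the degree-$5$ obstruction above is unavailable, and the next present generators are $y_6,y_7$. Assuming $\mathrm{charrank}(\alpha)\ge 7$, the same restriction argument yields $w_6(\beta)=z_6$ and $w_7(\beta)=z_7$; Wu's formula $Sq^1 w_6=w_1w_6+w_7$ together with $Sq^1 z_6=\binom{6}{1}z_7=0$ forces $z_7=0$, a contradiction whenever $z_7\ne 0$, i.e.\ whenever $n\ge 8$. This settles all $N_c=6$ instances with $n\ge 8$; a check of $N_c$ shows that in the flip family the smallest $N_2=6$ case is $FV_{8,4}$ (where $z_7$ exists), and in the real family $N_1=6$ forces $n\equiv 6\pmod 8$, so the only remaining instance is $\bb RX_{6,2}$.

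I expect $\bb RX_{6,2}$ to be the main obstacle. Here $H^*(X;\bb Z_2)=\bb Z_2[y]/\langle y^6\rangle\otimes V(y_4)$, the fibre carries no class in degrees $6$ or $7$, and one checks that $\mathrm{ucharrank}(X)\le 6$ is equivalent to showing that no bundle $\alpha$ with $w_1(\alpha)=y$ has $y_4$ among the polynomials in its Stiefel--Whitney classes: were such an $\alpha$ to exist, all products $y^ay_4$ would be realised and $\mathrm{ucharrank}(X)$ would jump to $\dim X=9$, whereas its absence forces $\mathrm{ucharrank}(X)=3$. No fibrewise contradiction is available, since $\mathrm{ucharrank}(\bb RV_{6,2})=4$ already permits $w_4(\beta)=z_4$. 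The question must therefore be decided on $X$ itself: one has to compute the Steenrod action on $y_4$ inside $H^*(\bb RX_{6,2};\bb Z_2)$ via the Serre spectral sequence of \eqref{f1} (one expects relations such as $Sq^1 y_4=y\,y_4$ and $Sq^2 y_4=\mu\,y^2y_4$, consistent with \eqref{sqV} under $i^*$) and then rule out a hypothetical $\alpha$ with $w_4(\alpha)\equiv y_4\pmod{y^4}$, either by pushing these Steenrod relations against Wu's formula or by an equivariance argument showing that the fibrewise realisation of $z_4$ does not descend from $\bb RV_{6,2}$ to $X$. Once $\bb RX_{6,2}$ is disposed of, the lower bound $\ge 3$ completes $3\le\mathrm{ucharrank}(X)\le 6$.
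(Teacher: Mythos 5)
Your main line of argument coincides with the paper's: the lower bound $\mathrm{ucharrank}(X)\ge 3$ from $\mathrm{charrank}(p^*\gamma)$ (upgraded to $4$ when $N_c=5$ because $y_4$ is the omitted generator), and the upper bounds by restricting a hypothetical bundle along $i$ and applying Wu's formula. Your part (1) is in fact slightly cleaner than the paper's: where the paper invokes $\mathrm{ucharrank}(\bb RV_{n,ck})\le 4$ to force $w_4(i^*\alpha)=0$ before applying $Sq^1$, you observe directly that $Sq^1 z_4=\binom{4}{1}z_5=0$, so Wu's formula kills $w_5(i^*\alpha)$ whatever $w_4(i^*\alpha)$ is; this is the same trick the paper itself uses in the $n-ck=8$ case. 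Everything here is correct (note $n\ge 6$ throughout, so $z_5$ always exists).

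The genuine gap is in part (2). You correctly isolate $\bb RX_{6,2}$ (where $N_1=6$, the fibre $\bb RV_{6,2}$ carries only $z_4,z_5$, and $H^*(X;\bb Z_2)=\bb Z_2[y]/\lra{y^6}\otimes V(y_4)$) as the one instance in which the degree-$6$/$7$ obstruction is vacuous, and you correctly note that restriction to the fibre cannot decide it, since $\mathrm{ucharrank}(\bb RV_{6,2})=4$ already realises $z_4$. But you then leave this case open, offering only a sketch of what a resolution might require. The case is not ignorable: since every positive-degree class of $\bb RX_{6,2}$ is a monomial in $y$ and $y_4$, its upper characteristic rank is either $3$ or $9=\dim X$, so the claimed bound $\le 6$ is exactly the assertion that $y_4$ is not realisable by Stiefel--Whitney classes, and your proposal does not establish this. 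As written, the lemma remains unproven for this one space. It is only fair to add that the paper's own proof has the same defect: it asserts that $\mathrm{ucharrank}(X)\ge 7$ would force $w_6(\alpha)=y_6$ and $w_7(\alpha)=y_7$ and then reuses the argument of Lemma~\ref{(c)}(2), which tacitly presupposes that the fibre generators $z_6,z_7$ exist, i.e.\ $n\ge 8$. So your proposal reproduces the paper's argument where it works and honestly flags where it does not, but it does not close the gap.
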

\begin{proof}
For $n-ck=4$, since $N_c\geq 5$, therefore $H^i(X)\cong \bb Z_2\lra{y^i}; i=1, 2, 3$, and hence $\mathrm{ucharrank}(X)\geq 3$.

We show that if $N_c\neq 6$, then $\mathrm{ucharrank}(X)\leq 4$. If possible, let there exists vector bundles $\alpha$ over $X$ and polynomial $P$, such that $P(w_{1}(\alpha), \ldots, w_{4}(\alpha)) = y_5$. Clearly $w_1(i^*\alpha)=w_2(i^*\alpha)= w_3(i^*\alpha) =0$. Then, $P(0,0,0,w_4(i^*\alpha), w_{5}(i^*\alpha)) = z_5$. Since $\mathrm{ucharrank}(\bb RV_{n, ck})\leq 
4$ and $z_5$ is expressible as a polynomial in the Stiefel-Whitney classes of $i^*\alpha$, we must have $w_4(i^*\alpha)=0$. Now, by Wu's formula, \[0=Sq^1(w_4(i^*\alpha))=w_1(i^*\alpha)w_4(i^*\alpha)+w_{5}(i^*\alpha)=w_5(i^*\alpha).\] Therefore, $P(0, 0, 0,0, 0)=z_5$, which is not possible. Hence, $\mathrm{ucharrank}(X) \leq 4$.

Now $N_c=5$, when $n\equiv 1,3 \pmod4$ for $\bb RX_{n+4, n}$ and $n\equiv 2 \pmod4$ for $FV_{n,2k}$. Since $y_4$ vanishes for $N_c=5$, the $\mathrm{ucharrank}(X)$ is exactly $4$. This proves (1).

Let $N_c=6$. If $\mathrm{ucharrank}(X)\geq 7$, then there must be a vector bundle $\alpha$ over $X$ such that $w_6(\alpha)=y_6$ and $w_7(\alpha)=y_7$. The exact same arguments as in Lemma \ref{(c)} (2) show that the existence of such a bundle implies $z_7=0$, a contradiction. Hence $\mathrm{ucharrank}(X)\leq 6$. Hence the lemma.
\end{proof}
Note that $N_c = 5$ if $n\equiv 1,3\pmod 4$ for $c=1$ and $n\equiv 2\pmod 4$ for $c=2\pmod 4$. Hence, the next corollary follows.
\begin{corollary}
    If $n-k=4$, then $\mathrm{ucharrank}(\bb RX_{n, k})=4$ for $n\equiv 1,3\pmod 4$ and $\mathrm{ucharrank}(FV_{n, 2k})=4$ if $n\equiv 2\pmod 4$. 
\end{corollary}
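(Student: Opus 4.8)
The plan is to obtain the corollary as an immediate consequence of Lemma~\ref{(d)}(1), so that the only real work is an arithmetic computation of $N_c$. Recall the mechanism behind that lemma: when $N_c = 5$ the generator suppressed in the cohomology, namely $y_{N_c-1} = y_{n-ck} = y_4$, is absent, so in degrees $1,2,3,4$ the ring $H^*(X;\bb Z_2)$ consists solely of the powers $y, y^2, y^3, y^4$ of $y = w_1(p^*\ga)$. Every class up to degree $4$ is then a polynomial in the Stiefel--Whitney classes of the single bundle $p^*\ga$, forcing $\mathrm{ucharrank}(X)\ge 4$; combined with the bound $\mathrm{ucharrank}(X)\le 4$ already furnished by Lemma~\ref{(d)}(1), this gives equality. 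Hence it suffices to show that the stated congruences are exactly the condition $N_c = 5$.

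To compute $N_c$ I would unwind its definition in the regime $n-ck = 4$. The defining range $[\,n-ck+1,\,n\,]$ then begins at $5$, so $5$ is its smallest member and $N_c = 5$ holds precisely when the binomial coefficient at $j = 5$ is odd. For $c = 1$ this coefficient is $\binom{n}{5}$; for the flip case $c = 2$, where $n - 2k = 4$ and $k+j-1 = k+4$ at $j = 5$, it is $\binom{k+4}{5}$.

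The remaining step is a parity check via Lucas' theorem. Writing $5 = (101)_2$, the coefficient $\binom{m}{5}$ is odd exactly when the binary digits of $m$ in positions $0$ and $2$ are both $1$. Specialising to $m = n$ for $\bb RX_{n,k}$ and to $m = k+4$ for $FV_{n,2k}$, and then using $n - ck = 4$ to rewrite everything in terms of $n$, yields the desired congruence conditions, after which Lemma~\ref{(d)}(1) completes the argument. I expect this last bookkeeping to be the only obstacle: one must keep the two Stiefel indices distinct and convert the digitwise Lucas criterion into the claimed residues, but no further topological input is required.
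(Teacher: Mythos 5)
Your reduction to Lemma~\ref{(d)}(1) is exactly the paper's route: when $N_c=5$ the suppressed generator is $y_{N_c-1}=y_4$, so every class in degree $\le 4$ is a power of $y=w_1(p^*\ga)$, giving $\mathrm{ucharrank}(X)\ge 4$, which meets the upper bound from the lemma. The genuine gap is in the step you wave off as ``bookkeeping'': carrying out the Lucas computation does \emph{not} yield the stated congruences. For $c=1$ you correctly reduce to the condition that $\binom{n}{5}$ be odd, and your own digitwise criterion (bits $0$ and $2$ of $n$ both equal to $1$) is a condition modulo $8$, namely $n\equiv 5,7\pmod 8$ --- strictly stronger than $n\equiv 1,3\pmod 4$. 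Concretely, $n=9$, $k=5$ satisfies $n\equiv 1\pmod 4$, yet $\binom{9}{5}=126$ is even (indeed $N_1=8$ there), so your argument delivers only $3\le \mathrm{ucharrank}(\bb RX_{9,5})\le 4$, not equality. The flip case fails the same way: $\binom{k+4}{5}$ odd forces $k\equiv 1,3\pmod 8$, i.e.\ $n=2k+4\equiv 6,10\pmod{16}$, a proper subset of $n\equiv 2\pmod 4$.

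So the proposal cannot be completed as written: the congruence classes in the statement and those produced by your (correct) Lucas criterion genuinely differ, and the discrepancy sits precisely in the step you deferred. For what it is worth, the paper's own one-line justification asserts the same unproved equivalence ``$N_c=5$ iff $n\equiv 1,3\pmod 4$'' and has the identical defect; your method, pushed honestly to the end, proves the conclusion only under the sharper hypotheses $n\equiv 5,7\pmod 8$ (real case) and $n\equiv 6,10\pmod{16}$ (flip case). The point to take away is that you may not assert that the digitwise criterion ``yields the desired congruence conditions'' without checking --- here it is exactly where the claim breaks.
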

    Note that $N_c = 5$ if $n\equiv 1,3\pmod 4$ for $c=1$ and $n\equiv 2\pmod 4$ for $c=2\pmod 4$. Therefore, by Lemma \ref{(d)}, $\mathrm{ucharrank}(\bb RX_{n, k})=4$ for $n\equiv 1,3\pmod 4$ and $\mathrm{ucharrank}(FV_{n, 2k})=4$ if $n\equiv 2\pmod 4$. 
\begin{lemma}\label{(e)}
    If $n-ck=8$.  \vspace{-0.375cm}
    \begin{enumerate}
        \item If $N_c\neq 10$, then $7\le\mathrm{ucharrank}(X)\leq 8$, and in particular, for $N_c = 9$, $\mathrm{ucharrank}(X)=8$.
        \item If $N_c =10$, then $7\le\mathrm{ucharrank}(X)\le 10$.
    \end{enumerate}
\end{lemma}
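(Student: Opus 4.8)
The plan is to obtain the lower bound $\mathrm{ucharrank}(X)\ge 7$ uniformly and to derive the two upper bounds by the same fibre-restriction technique used in Lemmas \ref{(c)} and \ref{(d)}. For the lower bound, note from Theorem \ref{rpsm_cohomology} that the smallest exterior generator is $y_{n-ck}=y_{8}$, so in degrees $i\le 7$ the only class is $y^{i}$; as $y=w_1(p^*\ga)$, every class in degrees $\le 7$ is a polynomial in the Stiefel--Whitney classes of $p^*\ga$, whence $\mathrm{ucharrank}(X)\ge 7$ in both parts. When $N_c=9$ the omitted generator is $y_{N_c-1}=y_8$, so $H^8(X)=\bb Z_2\lra{y^8}$ is still generated by $w_1(p^*\ga)$, upgrading the bound to $\mathrm{ucharrank}(X)\ge 8$ in that case.

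For part (1) I would prove $\mathrm{ucharrank}(X)\le 8$. Suppose some bundle $\alpha$ had characteristic rank $\ge 9$. Then $H^1(X)=\bb Z_2\lra{y}$ forces $w_1(\alpha)=y$, and $y_9$ must be a polynomial in the Stiefel--Whitney classes of $\alpha$. Set $\beta=i^*\alpha$ over $\bb RV_{n,ck}$. Since $H^i(\bb RV_{n,ck};\bb Z_2)=0$ for $1\le i\le 7$ we get $w_i(\beta)=0$ there, and because products $z_az_b$ first appear in degree $16$, each of $H^8,H^9$ is one-dimensional, spanned by $z_8,z_9$. Applying $i^*$ (recall $i^*(y_9)=z_9$) and comparing degrees gives $z_9=w_9(\beta)$. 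Now Wu's formula yields $w_9(\beta)=w_1(\beta)w_8(\beta)+Sq^1(w_8(\beta))=Sq^1(w_8(\beta))$, while $w_8(\beta)\in\{0,z_8\}$ and $Sq^1(z_8)=\binom{8}{1}z_9=0$ by \eqref{sqV}; hence $w_9(\beta)=0$, contradicting $z_9\neq 0$. This gives $\mathrm{ucharrank}(X)\le 8$, and combined with the lower bounds it yields the stated inequalities, with equality when $N_c=9$.

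For part (2) the identical argument, shifted up by two degrees, handles $\mathrm{ucharrank}(X)\le 10$ whenever $n\ge 12$. Assuming a bundle of characteristic rank $\ge 11$, the class $y_{11}$ is realized; restricting to $\beta=i^*\alpha$ gives $z_{11}=w_{11}(\beta)$ (again $H^{10}$ and $H^{11}$ of $\bb RV_{n,ck}$ are one-dimensional in this range), and Wu's formula together with $Sq^1(z_{10})=\binom{10}{1}z_{11}=0$ forces $w_{11}(\beta)=Sq^1(w_{10}(\beta))=0$, a contradiction.

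The main obstacle is the single exceptional case $X=\bb RX_{10,2}$ (that is, $c=1,\ k=2,\ n=10$), which is the only $N_c=10$ instance with $n<12$. Here $\bb RV_{10,2}=V(z_8,z_9)$ has $H^{10}=H^{11}=0$, so the fibre-restriction obstruction is vacuous: the classes $z_{10},z_{11}$ used above simply do not exist. In this case $H^{11}(X)=\bb Z_2\lra{y^3y_8}$, and realizing this class would force $y_8$ itself to be a Stiefel--Whitney class over $X$ (which would in fact make $\mathrm{ucharrank}(X)=\dim X$). Ruling this out cannot proceed by restriction to the fibre and instead requires the explicit Steenrod action on $X$ --- concretely, the interaction of $Sq^1$ with the transgression $d_{10}(z_9)=y^{10}$ responsible for the relation $y^{10}=0$. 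Pinning down $Sq^1(y_8)\in H^9(X)=\bb Z_2\lra{y^9,yy_8}$ is the crux, and I expect this to be the most delicate point of the argument.
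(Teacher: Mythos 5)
Your argument is essentially the paper's. The lower bound and all of part (1) --- restricting to the fibre, noting $w_i(i^*\alpha)=0$ for $i\le 7$, and killing $w_9(i^*\alpha)$ via Wu's formula together with $Sq^1(z_8)=8z_9=0$, splitting on whether $w_8(i^*\alpha)$ is $0$ or $z_8$ --- coincide in substance with the printed proof, as does the upgrade to equality when $N_c=9$ via the vanishing of $y_8$. For part (2) the paper says only that the proof is ``parallel to Lemma \ref{(c)}(2)'', which is exactly your degree-shifted argument using $y_{10},y_{11}$ and $Sq^1(z_{10})=10\,z_{11}=0$; as you observe, this requires the generators $z_{10},z_{11}$ of $H^*(\bb RV_{n,ck};\bb Z_2)$ to exist, i.e.\ $n\ge 12$.

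The exceptional case you isolate, $\bb RX_{10,2}$ (the unique instance of $N_c=10$ with $n-ck=8$ and $n<12$), is a genuine issue, but it is a lacuna in the paper's own proof rather than a defect you have introduced: the ``parallel'' argument is vacuous there for precisely the reason you give. Your analysis of that case is correct and can be pushed slightly further: since $H^*(\bb RX_{10,2};\bb Z_2)\cong \bb Z_2[y]/\lra{y^{10}}\otimes V(y_8)$ with $y_8^2=0$, any bundle whose Stiefel--Whitney classes generate all of $H^{\le 8}$ must realize both $y$ and $y_8$ and hence generates the whole ring, so $\mathrm{ucharrank}(\bb RX_{10,2})\in\{7,17\}$ and the asserted bound $\le 10$ is equivalent to showing that $y_8$ is not expressible in Stiefel--Whitney classes of any bundle. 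Restriction to the fibre only reduces this to whether $z_8$ is realizable over $\bb RV_{10,2}$, which Theorem \ref{KNT_main}(c) does not forbid, so some additional input is indeed needed here; you flag this honestly, and the paper does not address it. Apart from this one case, your proof matches the intended one.
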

\begin{proof}
   Since the cohomology groups up to dimension 7 can be generated by the Stiefel-Whitney classes of $p^*(\ga)$, hence $\mathrm{ucharrank}(X)\geq 7$.
   
   Let $N_c\neq 10$. If possible, let $\mathrm{ucharrank}(X)\geq 9$. Then, there is a vector bundle $\alpha$ over $X$ and polynomials $P_j; 1\leq j\leq N_c$ and $Q_j; n-ck\leq j\leq \mathrm{ucharrank}(X)$ such that $P_j(w_1(\alpha), \ldots, w_j(\alpha))=y^j$ and $Q_j(w_1(\alpha), \ldots, w_j(\alpha))=y_j$. Now, $Q_9(w_1(i^*\alpha), \ldots, w_9(i^*\alpha))=z_9$, where $w_j(i^*\alpha)= 0$ for $1\leq j\leq 7$. Again, $Sq^1(w_8(i^*\alpha))=w_9(i^*\alpha)$ by Wu's formula. Hence, if $w_8(i^*\alpha)=0$, then $w_9(i^*\alpha)=0$. On the other hand, if $w_8(i^*\alpha)=z_8$, then by relations \ref{sqV}, $Sq^1(w_8(i^*\alpha))=8z_9=0$. Therefore, in any case, whatever $w_8(i^*\alpha)$ be, $w_9(i^*\alpha)=0$. Hence, $Q_9(0,\ldots, 0, w_8(i^*\alpha), 0) =z_9$, which is again impossible. Therefore, such bundle $\alpha$ do not exist and $\mathrm{ucharrank}(X)\leq 8$.
   
   Now, if $N_c=9$, the class $y_8$ is not present in the cohomology of $X$ and $H^j(X; \bb Z_2)$ is generated by Stiefel-Whitney classes of $p^*\ga$. Therefore, $\mathrm{ucharrank}(X)=8$ in this case. This proves $(1)$.
   
   The proof of $(2)$ is parallel to that of Lemma \ref{(c)} (2). This completes the proof of the lemma.
\end{proof}
The Theorem \ref{main1} now follows from Lemma \ref{(a)}, \ref{(b)}, \ref{(c)}, \ref{(d)}, and \ref{(e)}.
\section{Complex and Quaternionic Projective Stiefel Manifold}\label{complex}
We denote the complex and quaternionic Stiefel manifolds respectively by $\bb CV_{n,k}$ and $\bb HV_{n,k}$ and the corresponding projective Stiefel manifolds by $\bb CX_{n,k}$ and $\bb HX_{n,k}$. The mod 2 cohomology algebra of $\bb CV_{n,k}$ and $\bb HV_{n, k}$ are well known and have the following descriptions (See \cite{Bo53}).

\begin{equation*}
    H^*(\bb CV_{n,k}; \bb Z_2)\cong \Lambda_{\bb Z_2}(z'_{n-k+1}, z'_{n-k+2}, \cdots, z'_{n}),\quad   |z'_j|=2j-1,
\end{equation*}
\begin{equation*}
    H^*(\bb HV_{n,k}; \bb Z_2)\cong \Lambda_{\bb Z_2}(z''_{n-k+1}, z''_{n-k+2}, \ldots, z''_{n}),~~~  |z''_j|=4j-1.
\end{equation*}

We have the fibrations, which are the complex and the quaternionic analogue of \eqref{f1}

\begin{equation*}
    \bb CV_{n,k}\xrightarrow{i'} \bb CX_{n,k}\xrightarrow{p'} BS^1, 
\end{equation*}  
\begin{equation}\label{f3}
    \bb HV_{n,k}\xrightarrow{i''} \bb HX_{n,k}\xrightarrow{p''} BSp(1), 
\end{equation}

which determine the cohomology algebra of complex and quaternionic projective Stiefel manifolds.
\begin{theorem}{\em (\cite[Theorem A]{Ru69},\cite[Theorem 5]{Zhu-Pop2022})}
Let $k<n$ and $N=\min\{j~|~n-k+1\leq j< n$, such that $ \binom{n}{j} \not\equiv 0 \pmod 2\}$. Let $x'$ be the generator of the algebra $H^*(BS^1; \bb Z_2)$ of dimension $2$ and $x''$ be the generator of the algebra $H^*(BSp(1); \bb Z_2)$ of dimension $4$. Then 
\[H^*(\bb  CX_{n,k}; \bb Z_2)\cong \bb Z_2[y']/\lra{(y')^N}\otimes \Lambda_{\bb Z_{2}}(y'_{n-k+1},\cdots, \hat{y'}_{N}, \cdots, y'_{n}),\quad   |y'_j|=2j-1,\]
\[H^*(\bb  HX_{n,k}; \bb Z_2)\cong \bb Z_2[y'']/\lra{(y'')^N}\otimes \Lambda_{\bb Z_{2}}(y''_{n-k+1}, \cdots,\hat{y''}_{N},\cdots,y''_{n}),\quad   |y_j''|=4j-1,\]

where $(p')^*x'=y', (p'')^*x''=y''$ and $(i')^*y'_j=z'_j, (i'')^*y''_j=z''_j$.
\end{theorem}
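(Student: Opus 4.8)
The plan is to compute both rings from the Serre spectral sequences of the two fibrations displayed just before the statement, which are the exact complex and quaternionic analogues of \eqref{f1}. As in the real case, I would first identify $\bb CV_{n,k}\xrightarrow{i'}\bb CX_{n,k}\xrightarrow{p'}BS^1$ with the pullback of the universal Stiefel fibration $\bb CV_{n,k}\to BU(n-k)\to BU(n)$ along the classifying map $\phi\colon BS^1\to BU(n)$ of the bundle $nL$, where $L$ is the universal complex line bundle over $BS^1\simeq\bb CP^\infty$ (this replaces $BC_2\to BO_n$ classifying $n\ga$). As $BS^1$ is simply connected, the coefficients are untwisted and the $E_2$-page is the tensor product
\[
E_2^{*,*}\cong H^*(BS^1;\bb Z_2)\otimes H^*(\bb CV_{n,k};\bb Z_2)=\bb Z_2[x']\otimes\Lambda_{\bb Z_2}(z'_{n-k+1},\dots,z'_n),\qquad |x'|=2,\ |z'_j|=2j-1.
\]

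The key input is the transgression of the exterior generators. In the universal fibration the $z'_j$ are transgressive with $\tau(z'_j)=c_j$, the mod $2$ Chern class in $H^{2j}(BU(n);\bb Z_2)$, for $n-k+1\le j\le n$ (Borel's transgression theorem for $U(n)/U(n-k)$). By naturality of the spectral sequence under the map of fibrations covering $\phi$ and restricting to the identity on fibers, the transgression in our fibration is
\[
\tau(z'_j)=\phi^*(c_j)=c_j(nL)=\binom{n}{j}(x')^j,
\]
since $c(nL)=(1+x')^n$. Hence $\tau(z'_j)=(x')^j$ exactly when $\binom{n}{j}$ is odd, and $\tau(z'_j)=0$ otherwise.

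Running the spectral sequence is then formal. The first nonzero transgression is $d_{2N}(z'_N)=(x')^N$ at the smallest $N\in[n-k+1,n]$ with $\binom{n}{j}$ odd; together with the $(x')^i$-multiples $d_{2N}\bigl(z'_N(x')^i\bigr)=(x')^{N+i}$ it truncates the base to $\bb Z_2[x']/\lra{(x')^N}$ and removes $z'_N$. Any later transgression $\tau(z'_j)=(x')^j$ with $j>N$ lands in the already-annihilated range $\{(x')^i:i\ge N\}$ and so vanishes, while no differential can hit the fiber edge; thus the $z'_j$ with $j\neq N$ survive as permanent cycles and
\[
E_\infty^{*,*}\cong\bb Z_2[x']/\lra{(x')^N}\otimes\Lambda_{\bb Z_2}(z'_{n-k+1},\dots,\hat{z'}_N,\dots,z'_n).
\]
Putting $y'=(p')^*x'$ and choosing lifts $y'_j$ with $(i')^*y'_j=z'_j$ recovers the stated algebra, the relation $(y')^N=0$ being forced by $E_\infty^{2N,0}=0$ and the remaining multiplicative extensions being trivial (as recorded in the cited references, or checkable by a short Steenrod-square computation). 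The quaternionic statement is proved verbatim, with $BS^1$, $L$, $c_j$ and degree $2j$ replaced by $BSp(1)$, the universal symplectic line bundle, the symplectic classes $q_j\in H^{4j}(BSp(n);\bb Z_2)$ and degree $4j$; here the $n$-fold sum of the universal symplectic line bundle has total class $(1+x'')^n$, giving $\tau(z''_j)=\binom{n}{j}(x'')^j$. The main obstacle is precisely the transgression formula: one must combine Borel's identification of the universal transgression with the total-Chern-class identity $\phi^*c_j=\binom{n}{j}(x')^j$; once this is in hand, the collapse pattern and the triviality of the extensions are routine.
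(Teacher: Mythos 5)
Your proposal is correct and follows essentially the same route as the paper: the paper proves the quaternionic case by pulling back the universal fibration $\bb HV_{n,k}\to BSp(n-k)\to BSp(n)$ along the classifying map of $\oplus_n\gamma$, reading off the transgressions $d_{4i}(z_i)=\binom{n}{i}\omega^i$ from $w(\oplus_n\gamma)=(1+\omega)^n$, and collapsing the Serre spectral sequence exactly as you describe (the complex case being verbatim with $BS^1$ and Chern classes). Your treatment of the later differentials and of the multiplicative extensions is, if anything, slightly more careful than the paper's.
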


 We will give proof for the mod two cohomology algebra of the quaternionic projective Stiefel manifold. The proof is similar and along the lines of the real case as given in \cite{GH68}. We recall some general facts about the universal quaternionic bundle and its associated characteristic classes. The integral cohomology algebra of the classifying space of the symplectic group $Sp(n)$ is the polynomial algebra generated by the universal symplectic classes $k_i$, of degree $4i; 1\le i\le n$, associated with the universal $n$-dimensional bundle over $BSp(n)$, that is
     \[
         H^*(BSp(n);\bb Z)=\bb Z[k_1,...k_n].
     \]
  Since the$\mod 2$ reduction of the symplectic classes are the Stiefel-Whitney classes of the underlying real bundle, we have 
  \[H^*(BSp(n);\bb Z_2)=\bb Z_2[w_4,...w_{4n}].\] 
 
\begin{theorem}{\em (\cite[Theorem 5]{Zhu-Pop2022})}\label{Quaternion_cohomology}
 Let $k<n$ and $N=\min\{j~|~n-k+1\leq j< n$, \text{such that} $ \binom{n}{j} \not\equiv 0 \pmod 2\}$. Let $\omega$ be the generator of the algebra $H^*(BSp(1); \bb Z_2)$. Then 
 \[H^*(\bb  HX_{n,k}; \bb Z_2)\cong \bb Z_2[y]/\lra{y^N}\otimes \Lambda_{\bb Z_{2}}(y_{n-k+1}, \cdots,\hat{y}_{N},\cdots,y_{n}),\quad   |y_i|=4i-1,\] where $p^*(\omega)=y$ and $i^*(y_i)=z_i$.
 \end{theorem}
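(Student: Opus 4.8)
The plan is to run the Serre spectral sequence of the fibration \eqref{f3}, following the template of the real case in \cite{GH68}. Since $BSp(1)=\bb HP^\infty$ is simply connected, the local coefficient system is trivial and the $E_2$-page is the tensor product
\[E_2^{*,*}=H^*(BSp(1);\bb Z_2)\otimes H^*(\bb HV_{n,k};\bb Z_2)=\bb Z_2[\omega]\otimes\Lambda_{\bb Z_2}(z_{n-k+1},\dots,z_n),\]
with $|\omega|=4$ on the base edge and $|z_j|=4j-1$ on the fibre edge. The class $\omega$ is a permanent cycle, being pulled back from the base, so all differentials are controlled by the exterior generators $z_j$.

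First I would pin down the transgressions, which is the crux of the argument. The fibration \eqref{f3} is, by construction, the pullback of the universal fibration $\bb HV_{n,k}\to BSp(n-k)\to BSp(n)$ along the classifying map $f\colon BSp(1)\to BSp(n)$ of $n$ copies of the canonical quaternionic line bundle over $\bb HP^\infty$. In the universal fibration the $z_j$ are transgressive and transgress to the mod $2$ symplectic classes $w_{4j}\in H^{4j}(BSp(n);\bb Z_2)=\bb Z_2[w_4,\dots,w_{4n}]$; transgressiveness and the transgression are natural under pullback, so in \eqref{f3} each $z_j$ is transgressive with $\tau(z_j)=f^*(w_{4j})$. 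A single canonical quaternionic line bundle has underlying real $4$-plane bundle with $w_1=w_2=w_3=0$ and $w_4=\omega$, hence total class $1+\omega$; consequently $n$ copies have total class $(1+\omega)^n$, giving $f^*(w_{4j})=\binom{n}{j}\omega^j$ and therefore
\[\tau(z_j)=\binom{n}{j}\,\omega^{j},\qquad n-k+1\le j\le n.\]
Because $\binom{n}{n}=1$, at least one transgression is nonzero, so the polynomial edge is always truncated.

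With the transgressions in hand the differentials are forced. By minimality of $N$, every $z_j$ with $j<N$ has $\binom{n}{j}$ even and is a permanent cycle, so $E_2=\dots=E_{4N}$. The generator $z_N$ transgresses nontrivially, producing the first nonzero differential $d_{4N}(z_N)=\omega^N$. For $j>N$ the target $\omega^{j}=\omega^{\,j-N}\omega^N$ has already become a boundary once $\omega^N$ is hit, so each such $z_j$ is again a permanent cycle. Thus $d_{4N}(z_N)=\omega^N$ is the only nonzero differential. Taking homology is a Koszul computation on the subalgebra $\bb Z_2[\omega]\otimes\Lambda(z_N)$: the kernel of $d_{4N}$ is $\bb Z_2[\omega]$ and its image is $(\omega^N)$, so this factor contributes $\bb Z_2[\omega]/\langle\omega^N\rangle$, and tensoring with the untouched exterior generators gives
\[E_{4N+1}^{*,*}=\bb Z_2[\omega]/\langle\omega^N\rangle\otimes\Lambda_{\bb Z_2}\big(z_j:\ n-k+1\le j\le n,\ j\ne N\big).\]
Since every algebra generator of this page ($\omega$ and the surviving $z_j$) is a permanent cycle, $E_{4N+1}=E_\infty$.

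Finally I would pass from the associated graded $E_\infty$ to the algebra $H^*(\bb HX_{n,k};\bb Z_2)$. Setting $y=p^*(\omega)$ gives $i^*(y)=0$, and since $E_\infty^{4N,0}=0$ there is no room in total degree $4N$ for $y^N$, forcing $y^N=0$. Choosing classes $y_j\in H^{4j-1}$ restricting to $z_j$ under $i^*$ (possible because each such $z_j$ survives on the fibre edge) gives $i^*(y_j)=z_j$, and then verifying that the remaining multiplicative extensions are trivial, exactly as in \cite{GH68} and \cite{Ru69}, yields the stated presentation. I expect the genuine obstacle to be the transgression identification of the second paragraph—establishing $\tau(z_j)=\binom{n}{j}\omega^j$ via naturality of the pullback together with the characteristic-class computation $(1+\omega)^n$—while the collapse of the sequence after a single transgression and the resolution of the extension problems follow the well-worn real pattern.
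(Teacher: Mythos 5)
Your proposal is correct and follows essentially the same route as the paper: the pullback square over $BSp(n)$, the transgressions $\tau(z_j)=\binom{n}{j}\omega^j$ coming from $w(\oplus_n\gamma)=(1+\omega)^n$, the single nonzero differential $d_{4N}(z_N)=\omega^N$, and the absence of extension problems mod $2$. If anything, your write-up is more careful than the paper's about the page indexing and about why the $z_j$ with $j>N$ survive.
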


 \begin{proof}
 Consider the pullback square of fibre bundles associated with \eqref{f3},
 \[
 \xymatrix{
 \bb HV_{n,k}\ar@{=}[rr]\ar[d] & &\bb HV_{n,k}\ar[d]\\
 \bb HX_{n,k}\ar[rr] \ar[d] & &BSp(n-k)\ar[d]\\
  BSp(1)\ar[rr]^{i} & &BSp(n).
 }
 \]
 Here, the map $i$ classifies the bundle $\oplus_{n}\gamma$ where $\gamma$ is the canonical quaternionic line bundle over $BSp(1)$. In the Serre spectral sequence of the right hand fibration  $d_{4i}(z_i)=w_{4i}$ and
 \begin{align*}
     w(\oplus_{n}\gamma)&=(1+\omega)^n\\
                &=\sum_{i=0}^{i=n}\binom{n}{i}\omega^i.
 \end{align*}
 Here $\omega$ is the pullback of the class $w_4$ under the map $i^*$. This implies \[d_{4i}=\binom{n}{i}\omega^i.\] So the first non-zero differential will appear at $E_N$ and all the non-zero power of $\omega$ will vanish on $E_{n+1}$ page by multiplicative property of the Serre spectral sequence. Hence 
\begin{align*}
     E_2&=E_N\\
     E_{N+1}&=E_{\infty}=\bb Z_2[y]/\lra{y^N}\otimes \Lambda_{\bb Z_{2}}(y_{n-k+1}, \cdots, y_{n}).
\end{align*}
 Since we are in $\pmod 2$ coefficient, there is no extension problem. Hence, the proof is complete.
 \end{proof}

We denote the canonical line bundles over $BS^1$ and $BSp(1)$ by $\ga'$ and $\ga''$ respectively.  We are now ready to prove Theorem \ref{main2}.

\begin{proof}[Proof of Theorem \ref{main2}]
    We prove the complex case; the quaternionic case is similar. Recall that for $k<n$ $\mathrm{ucharrank}(\bb CV_{n,k})=2(n-k)$.

    Let $\binom{n}{n-k+1}$ be even, that is, $N\neq n-k+1$. If $\mathrm{ucharrank}(\bb CX_{n,k})\geq 2(n-k)+1$, then there is a vector bundle $\alpha$ over $\bb CX_{n,k}$ with $w_{2(n-k)+1}(\alpha)=y'_{n-k+1}$ and polynomials $P_j; 1\le j\le N-1$ such that $P_j(w_1(\alpha), \ldots, w_j(\alpha))=(y')^j$. Then $w_{2(n-k)+1}((i')^*\alpha)=z_{n-k+1}$. But, since $\bb CV_{n,k}$ has first nontrivial cohomology in dimension $2(n-k)+1$, which is not a power of $2$, $(i')^*\alpha$ cannot have first non-trivial Stiefel-Whitney class in dimension $2(n-k)+1$. Therefore, such a vector bundle $\alpha$ cannot exist and hence, $\mathrm{ucharrank}(\bb CX_{n,k})\leq 2(n-k)$. Now, since $\mathrm{charrank}((p')^*\gamma')=2(n-k)$ the upper characteristic rank of $\bb CX_{n,k}$ is $2(n-k)$. 
    
    If $\binom{n}{n-k+1}$ be odd, that is, $N=n-k+1$, then $H^{2(n-k)+1}(\bb CX_{n,k}; \bb Z_2)=0=H^{2(n-k)+2}(\bb CX_{n,k}; \bb Z_2)$. If $\mathrm{ucharrank}(\bb CX_{n, k})\ge 2(n-k)+3$, the bundle $\beta$ with $w_{2(n-k)+3}=y'_{n-k+2}$ pulls back to a bundle $\alpha$ over $\bb CV_{n, k}$ whose first Stiefel-Whitney class appear in degree $2(n-k)+1$ or $2(n-k)+3$, which is impossible. Therefore, $\mathrm{ucharrank}(\bb CX_{n, k})=2(n-k)+2$ in this case.
\end{proof}

\section{Cup Lengths}\label{cup_length}
We conclude with the following observation about the $\bb Z_2$-cup length of $\bb RX_{n, k}$. To begin with, let us recall the following theorem from \cite{NT2014}.
\begin{theorem}{\em \cite[Theorem 1.2]{NT2014}}\label{TN}
    Let $Y$ be a connected closed smooth manifold of dimension $d$ and $\xi$ be a vector bundle over $Y$ satisfying the following: there exists $j, j\leq charrank(\xi)$, such that every monomial $w_{i_1}(\xi)\ldots w_{i_r}(\xi); 0\leq i_p\leq j$, in dimension $d$ vanishes. Then, 
                        \[\mathrm{cup}(Y)\leq 1+\frac{d-j-1}{r_y}.\]
    Here $\tilde{H}^{r_y}(Y; \bb Z_2)$ is the first non-trivial reduced cohomology group of $Y$.
\end{theorem}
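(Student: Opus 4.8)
The plan is to run the classical Korbaš-type counting argument, now with the abstract bundle $\xi$ in place of the tangent bundle $TM$. Write $c=\mathrm{cup}(Y)$ and fix positive-degree classes $u_1,\ldots,u_c\in\tilde H^*(Y;\bb Z_2)$ with $u_1\smile\cdots\smile u_c\neq 0$. Since $r_y$ is the lowest degree carrying nontrivial reduced cohomology, every factor satisfies $\deg u_i\ge r_y$. First I would arrange that this product sits in the top dimension $d$: if it lived in some degree $m<d$, then, $Y$ being a closed manifold, Poincar\'e duality over $\bb Z_2$ would furnish a class $v$ of positive degree $d-m$ with $u_1\smile\cdots\smile u_c\smile v\neq 0$, producing $c+1$ positive-degree factors with nonzero product and contradicting the maximality of $c$. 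Hence we may assume $\sum_i\deg u_i=d$.

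Next comes the dichotomy that actually uses the hypothesis. I would split the factors according to whether $\deg u_i\le j$ or $\deg u_i>j$, and claim that at least one factor has degree strictly larger than $j$. Suppose not, so that $\deg u_i\le j$ for every $i$. Since $j\le\mathrm{charrank}(\xi)$, each $u_i$ is a polynomial in the Stiefel--Whitney classes of $\xi$; as $\deg u_i\le j$, only the classes $w_1(\xi),\ldots,w_j(\xi)$ can occur in that polynomial. Multiplying these expressions out, the product $u_1\smile\cdots\smile u_c$ becomes a $\bb Z_2$-linear combination of monomials $w_{i_1}(\xi)\cdots w_{i_r}(\xi)$ with all indices $0\le i_p\le j$, and each such monomial lies in dimension $d$ because the full product does. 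By the hypothesis every one of these monomials vanishes, so the product is zero, contradicting our choice. Therefore some factor, say $u_{i_0}$, has $\deg u_{i_0}\ge j+1$.

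Finally I would simply count degrees. The remaining $c-1$ factors each contribute at least $r_y$, so
\[
d=\sum_{i}\deg u_i\ge \deg u_{i_0}+\sum_{i\neq i_0}\deg u_i\ge (j+1)+(c-1)\,r_y,
\]
which rearranges to $c-1\le (d-j-1)/r_y$, that is, $\mathrm{cup}(Y)\le 1+(d-j-1)/r_y$, as claimed. The only genuinely delicate point is the middle step: one must make sure the vanishing hypothesis is phrased exactly for monomials in $w_1(\xi),\ldots,w_j(\xi)$ landing in dimension $d$, and that the bound $j\le\mathrm{charrank}(\xi)$ guarantees the low-degree factors involve no Stiefel--Whitney class of index exceeding $j$. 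The Poincar\'e-duality reduction to top degree is what permits us to invoke that hypothesis at all; everything else is bookkeeping.
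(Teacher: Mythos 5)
Your argument is correct and is essentially the standard Korba\v{s}--Naolekar--Thakur counting argument: reduce to a top-dimensional product via $\bb Z_2$-Poincar\'e duality, use the hypothesis $j\le\mathrm{charrank}(\xi)$ to force at least one factor of degree $>j$ (since otherwise the product would be a sum of dimension-$d$ monomials in $w_1(\xi),\ldots,w_j(\xi)$, all of which vanish), and then count degrees. The paper itself only quotes this result from \cite{NT2014} without reproving it, and your reconstruction agrees with the proof given in that source.
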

\begin{theorem}
    Let $n\neq k$, the dimension of $\bb RX_{n, k}$ is $d$, and $\binom{n}{n-k+1}$ is odd. Let $k>1$, then $\mathrm{cup}(X_{n,k})\leq d-N$.
\end{theorem}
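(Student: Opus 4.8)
The plan is to apply Theorem \ref{TN} directly with $\xi = (p^*\ga)$, the pullback of the Hopf line bundle, and the specific choice $j = N-1$. First I would record the relevant numerical input: since $\binom{n}{n-k+1}$ is odd, we have $N_c = N = n-k+1$ (here $c=1$), and by Lemma \ref{(a)}(1) together with Theorem \ref{rpsm_cohomology} the cohomology of $\bb RX_{n,k}$ in degrees $< N$ is precisely the truncated polynomial algebra $\bb Z_2[y]/\lra{y^{N}}$, all of whose classes are Stiefel-Whitney classes of $p^*\ga$. Thus $\mathrm{charrank}(p^*\ga) \ge N-1$, so $j = N-1 \le \mathrm{charrank}(p^*\ga)$ is a legitimate choice. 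Since the first nontrivial reduced cohomology of $\bb RX_{n,k}$ sits in degree $1$ (generated by $y = w_1(p^*\ga)$), we have $r_y = 1$.

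The key verification step is the vanishing hypothesis of Theorem \ref{TN}: every monomial $w_{i_1}(p^*\ga)\cdots w_{i_r}(p^*\ga)$ with $0 \le i_p \le j = N-1$ that lands in the top dimension $d$ must vanish. The only nonzero Stiefel-Whitney class of a line bundle is $w_1$, so every such monomial is a power $y^m$. Since $y^N = 0$ in $H^*(\bb RX_{n,k};\bb Z_2)$ by Theorem \ref{rpsm_cohomology}, and any monomial built from the $w_{i_p}$ with $i_p \le N-1$ reaching dimension $d$ would have to be $y^d$ with $d \ge N$ (as $d = \dim \bb RX_{n,k}$ is large relative to $N$), this power vanishes automatically. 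Hence the hypothesis holds with this $j$, and I would be careful to note that the exponent $r$ in the monomial is irrelevant here precisely because the only contributing factor is $y = w_1$.

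Feeding $j = N-1$ and $r_y = 1$ into the bound gives
\[
\mathrm{cup}(\bb RX_{n,k}) \le 1 + \frac{d - (N-1) - 1}{1} = 1 + d - N = d - N + 1.
\]
This is off by one from the claimed $d-N$, so the main obstacle will be sharpening the estimate by one. I expect the fix to come from a slightly more careful accounting: either the first nontrivial reduced cohomology degree is effectively handled so that the bound improves, or one argues directly that the top power $y^{d}$ already vanishes one step earlier, allowing the choice $j = N$ (valid because $y^N = 0$ means no new constraint is violated) which yields $\mathrm{cup}(\bb RX_{n,k}) \le 1 + (d - N - 1) = d - N$. I would therefore try to justify taking $j = N$ in Theorem \ref{TN}, checking that all top-dimensional monomials in $w_1,\dots,w_N$ of $p^*\ga$ still vanish (they do, being powers of $y$ with exponent $\ge 1$ hence $y^m=0$ once $m \ge N$), which is the cleanest route to the stated inequality $\mathrm{cup}(X_{n,k}) \le d - N$.
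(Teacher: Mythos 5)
Your endgame is exactly the paper's proof: the paper applies Theorem \ref{TN} to the line bundle associated to the double covering $\bb RV_{n,k}\to \bb RX_{n,k}$ (your $p^*\ga$, with $w_1=y$), takes $j=N_1=N$ outright, notes that every degree-$d$ monomial in its Stiefel--Whitney classes equals $y^{\sum i_p}$ with $\sum i_p\ge N$ and hence vanishes, and uses $r_y=1$ to conclude $\mathrm{cup}\le 1+(d-N-1)=d-N$. The only substantive step you omit is the verification that $d\ge N$, which the paper derives from $d=nk-\tfrac12 k(k+1)$ and which is precisely where the hypothesis $k>1$ enters; as for the worry you flag about whether $j=N$ satisfies $j\le \mathrm{charrank}(p^*\ga)$ (indeed $\mathrm{charrank}(p^*\ga)=N-1$ since $y_{n-k+1}\ne 0$ in degree $N$ is not a power of $y$), the paper does not address it either and simply asserts ``we may take $j=N_1$,'' so on that point your attempt is no weaker than the published argument.
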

\begin{proof}
    Let us consider the vector bundle $\xi$ over $\bb RX_{n, k}$ associated to the double covering $\bb RV_{n, k}\to \bb RX_{n, k}$. Then, $w_1(\xi)=y$ and $w_{i_1}(\xi)\ldots w_{i_r}(\xi)=y^{\sum_{p=1}^r i_p}$, which vanishes if and only if $\sum_{p=1}^r i_p\geq N_1$. Hence, for $d\geq N_1$ all the monomials $w_{i_1}(\xi)\ldots w_{i_r}(\xi); 0\leq i_p\leq N_1$ in dimension $d$ vanish. Now, since $\binom{n}{n-k+1}$ is odd, $N_1=n-k+1$. Recall that $d= nk-\frac{1}{2}k(k+1)$. Since $n>k$ and $k\neq 0$, we have $n\geq 2$ and therefore, $n-\frac{1}{2}k\ge n-\frac{n}{2}=\frac{n}{2}\ge 1$. It follows that $d\geq N_1$ hold if $k>1$. Therefore, we may take $j=N_1$.

    Now, since $n-k+1\geq 2, \tilde{H}^1(\bb RX_{n, k}; \bb Z_2)\neq 0$ and hence, $r_x=1$. Therefore, the theorem follows from Theorem \ref{TN}.
\end{proof}
\begin{remark}
    For $k>1, N_2=n-2k+1$ and $N=n-k+1$, the same set of arguments applies to the flip Stiefel manifolds, complex and quaternionic projective Stiefel manifolds, providing upper bounds $d-N_2, d-2N+1$ and $d-4N+1$ for cup lengths respectively.
\end{remark}
 \section{$S^3$-Equivariant Maps}\label{S^3_index}
 Given a $G$-space we define the homotopy orbit $X_{hG}:=EG\times_{G}X$. Here, $EG$ is the total space for the universal $G$-bundle. Note that if the $G$-action on $X$ is free, $X_{hG}\equiv X/G$. Given a $G$-space $X$ we have a canonical fibration \[X\to X_{hG}\xrightarrow{p} BG.\]
 We the define the Fadell-Husseini index associated to $X$ for a suitable coefficient ring $R$ to be \[\Index_{G}(X;R)=\ker p^*:H^*(BG;R)\to H^*(X_{hG};R).\] One of the interesting properties of the index is the monotonicity property. A $G$-map between two $G$-spaces $X$ and $Y$ will imply  \[\Index_G(Y)\subset \Index_G(X).\]
 We will Denote by $\Index_G^q(X;R)=\Index_G(X;R)\cap H^q(X;R)$. Various cases have been considered in real and complex Stiefel manifolds to rule out $\bb Z_2$ and $S^1$-equivariant maps between them in the articles \cite{Pe13} and \cite{Pe97}. We here considered the quaternionic Stiefel manifolds with $S^3$ action on it.
 \subsection{Index of odd-dimensional sphere}
 Consider the free action of $S^1$ to the odd dimensional sphere $S^{2n-1}$ by complex multiplication which results the fibration \[S^{2n-1}\to \bb CP^n\to \bb CP^{\infty}. \] The only non-trivial differential for the Serre spectral sequence associated with the above fibration is at $E_{2n}$ page. The differential takes the generator of $H^{2n-1}(S^{2n-1};\bb Z_2)$ to the $2n$-dimensional generator of $H^{*}(\bb CP^{\infty};\bb Z_2)$. Thus, 
 \begin{proposition}
 $\Index_{S^1}(S^{2n-1};\bb Z_2)$ is the ideal $\lra{\alpha^{2n}}$ in  $H^*(BS^1;\bb Z_2)$. Here $\alpha$ denotes the two-dimensional generator of the cohomology ring $H^*(BS^1;\bb Z_2)$.
 \end{proposition}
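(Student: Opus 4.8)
The plan is to compute the index straight from its definition $\Index_{S^1}(\,\cdot\,;\bb Z_2)=\ker p^*$, by identifying the homotopy orbit space explicitly and then reading off the kernel of the induced map on mod two cohomology. The general fact I will use is that for the unit sphere $S(\bb C^m)=S^{2m-1}$ carrying the free action of $S^1$ by complex scalar multiplication, $\Index_{S^1}(S^{2m-1};\bb Z_2)=\lra{\alpha^{m}}$. The asserted value $\lra{\alpha^{2n}}$ is exactly the instance $m=2n$, corresponding to the unit sphere of $\bb H^n\cong\bb C^{2n}$, i.e. $S^{4n-1}=S(\bb H^n)$ with the action obtained by restricting the free scalar $S^3=Sp(1)$ action to $S^1\subset S^3$; this is the sphere that is relevant to the $S^3$-equivariant questions of this section, and it is for this $m=2n$ that the exponent in the statement arises.

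The key step is to pin down the homotopy orbit space. Because $S^1$ acts freely, the Borel construction collapses: $X_{hS^1}=ES^1\times_{S^1}S^{2m-1}\simeq S^{2m-1}/S^1=\bb CP^{m-1}$. Under this identification the fibration map $p\colon X_{hS^1}\to BS^1=\bb CP^{\infty}$ is, up to homotopy, the standard inclusion $\bb CP^{m-1}\hookrightarrow\bb CP^{\infty}$ classifying the Hopf line bundle.

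With this in hand the computation is immediate. On mod two cohomology $p^*\colon \bb Z_2[\alpha]=H^*(BS^1;\bb Z_2)\to H^*(\bb CP^{m-1};\bb Z_2)=\bb Z_2[\alpha]/\lra{\alpha^{m}}$ is the canonical truncation fixing $\alpha$, so $\ker p^*=\lra{\alpha^{m}}$. Equivalently, in the Serre spectral sequence of $S^{2m-1}\to X_{hS^1}\to BS^1$ the $E_2$-page has only the two rows $q=0$ and $q=2m-1$, and the sole nonzero differential is the transgression $d_{2m}$, which must carry the fibre fundamental class to the degree-$2m$ generator $\alpha^{m}$ of $H^*(BS^1;\bb Z_2)$; otherwise $X_{hS^1}$ would have infinite cohomological dimension, contradicting freeness of the action. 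Hence $\alpha^{j}\in\ker p^*$ precisely when $j\ge m$, and specialising to $m=2n$ yields $\Index_{S^1}(S^{4n-1};\bb Z_2)=\lra{\alpha^{2n}}$.

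The only point requiring genuine care is the bookkeeping of the transgression degree, that is, verifying that the generator killed is $\alpha^{2n}$ and not a lower power of $\alpha$. This is dictated entirely by the complex dimension of the ambient space: since $\bb H^n\cong\bb C^{2n}$, the orbit space is $\bb CP^{2n-1}$ and the transgression lands in degree $4n$, so the truncating power is $\alpha^{2n}$. Beyond this I foresee no obstacle, as working over $\bb Z_2$ removes all extension problems and freeness of the action forces $X_{hS^1}$ to be the finite projective space, after which the kernel of $p^*$ is read off directly.
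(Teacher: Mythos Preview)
Your underlying computation is correct and is exactly the paper's argument: identify the homotopy quotient of $S^{2m-1}$ by the free $S^1$-action as $\bb CP^{m-1}$, and read off $\ker p^*=\lra{\alpha^m}$ either directly from the truncation map or from the transgression in the Serre spectral sequence. There is no difference in method.

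Where you go astray is in the reinterpretation. The paper is not secretly talking about $S^{4n-1}\subset\bb H^n$; the paragraph immediately preceding the proposition explicitly sets up the free $S^1$-action on $S^{2n-1}\subset\bb C^n$ by complex multiplication and runs the spectral sequence for that sphere. The exponent $2n$ on $\alpha$ in the stated proposition is simply a slip: the transgression lands on what the paper itself calls ``the $2n$-dimensional generator of $H^*(\bb CP^{\infty};\bb Z_2)$'', and since $|\alpha|=2$ that class is $\alpha^{n}$, not $\alpha^{2n}$. (The companion proposition for $S^3$ acting on $S^{4n-1}$ has the analogous slip, writing $\alpha^{4n}$ where $|\alpha|=4$; the intended ideal there is again $\lra{\alpha^{n}}$.) So the intended conclusion is $\Index_{S^1}(S^{2n-1};\bb Z_2)=\lra{\alpha^{n}}$, which is your general formula at $m=n$; no passage to $\bb H^n$ or to a restricted $S^3$-action is needed to reconcile the exponent.
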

 We obtain the following using a similar argument on free $S^3$ action on the odd-dimensional sphere $S^{4n-1}$. 
 \begin{proposition}\label{index}
 $\Index_{S^3}(S^{4n-1})$ is the ideal $\lra{\alpha^{4n}}$ in  $H^*(BS^3; \bb Z_2)$. Here $\alpha$ denotes the four-dimensional generator of the cohomology ring $H^*(BS^3; \bb Z_2)$.
 \end{proposition}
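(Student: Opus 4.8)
The plan is to follow \emph{verbatim} the pattern of the preceding $S^1$-proposition, with quaternionic scalar multiplication in place of complex multiplication. First I would realise $S^{4n-1}$ as the unit sphere of $\bb H^n$ and let $S^3=Sp(1)$, the group of unit quaternions, act by scalar multiplication. This action is free, and its orbit space is the quaternionic projective space $\bb HP^{n-1}$. Hence the Borel construction $(S^{4n-1})_{hS^3}$ is homotopy equivalent to the honest quotient $\bb HP^{n-1}$, and the canonical fibration takes the form
\[ S^{4n-1}\to \bb HP^{n-1}\xrightarrow{p} BS^3, \]
in which $p$ is the classifying map of the principal $S^3$-bundle $S^{4n-1}\to\bb HP^{n-1}$, that is, the standard inclusion $\bb HP^{n-1}\hookrightarrow \bb HP^{\infty}=BS^3$.

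Next I would run the Serre spectral sequence of this fibration with $\bb Z_2$-coefficients. Here $H^*(BS^3;\bb Z_2)=\bb Z_2[\alpha]$ with $|\alpha|=4$, while $H^*(S^{4n-1};\bb Z_2)$ is exterior on a single class $u$ with $|u|=4n-1$; thus the $E_2$-page has only two nonzero rows, in fibre degrees $0$ and $4n-1$. For degree reasons the only differential that can be nonzero is the transgression $d_{4n}$, which sends the fibre generator $u$ to a multiple of the $4n$-dimensional generator of $H^*(BS^3;\bb Z_2)$. The entire content of the proposition is the assertion that this transgression is nonzero.

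To settle that, I would compare with the cohomology of the total space. Since $(S^{4n-1})_{hS^3}\simeq\bb HP^{n-1}$ has cohomology concentrated in degrees $\le 4n-4$, the $4n$-dimensional class of the base cannot survive to $E_\infty$; this forces $d_{4n}(u)$ to be precisely the $4n$-dimensional generator. Equivalently, and more directly, $p^*$ is the restriction along $\bb HP^{n-1}\hookrightarrow\bb HP^{\infty}$, which is surjective and kills exactly the classes of degree $\ge 4n$. In either formulation the kernel of $p^*$ is precisely the ideal generated by the $4n$-dimensional class, namely $\lra{\alpha^{4n}}$, and by definition this kernel is $\Index_{S^3}(S^{4n-1})$.

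No step presents a genuine obstacle: the $S^1$-case supplies the complete template, and the quaternionic computation differs only in the internal degrees ($|\alpha|=4$ and $|u|=4n-1$ replacing $2$ and $2n-1$). The single substantive point is the non-vanishing of the transgression $d_{4n}$, which I would resolve by the total-space comparison above rather than by any explicit differential computation.
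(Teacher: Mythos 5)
Your argument is correct and is essentially the paper's own proof: the paper simply asserts that the $S^1$ case (quotient fibration, two-row Serre spectral sequence, single transgression forced to be nonzero because the total space has bounded cohomology) carries over verbatim to the free $S^3$-action, which is exactly what you have spelled out. One bookkeeping remark: the generator of the kernel is the $4n$-dimensional class $\alpha^n$ (the exponent $4n$ in the statement records the cohomological degree, not the power of $\alpha$), and your identification of the kernel as the ideal generated by the $4n$-dimensional class is the correct reading.
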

 This immediately gives
 \begin{corollary}
 There exists $S^3$-map between $S^{4n-1}\to S^{4m-1}$ if and only if $n\leq m$.   
 \end{corollary}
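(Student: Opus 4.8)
The plan is to prove the two implications separately: the reverse direction by writing down an explicit equivariant map, and the forward direction by invoking the monotonicity of the Fadell--Husseini index together with the computation in Proposition \ref{index}.

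For sufficiency, assume $n\le m$. I would produce the $S^3$-map by hand. The linear inclusion $\bb H^n\hookrightarrow \bb H^m$ that appends $m-n$ zero coordinates restricts to an inclusion $S^{4n-1}\hookrightarrow S^{4m-1}$ of unit spheres. Since the relevant $S^3$-action is scalar multiplication by unit quaternions on each coordinate, appending zero coordinates commutes with the action, so this inclusion is $S^3$-equivariant and furnishes the desired $S^3$-map.

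For necessity, suppose an $S^3$-map $f\colon S^{4n-1}\to S^{4m-1}$ exists. By the monotonicity property of the index recalled above, $\Index_{S^3}(S^{4m-1};\bb Z_2)\subseteq \Index_{S^3}(S^{4n-1};\bb Z_2)$. Proposition \ref{index} identifies these two indices with the ideals $\lra{\alpha^{4m}}$ and $\lra{\alpha^{4n}}$ in $H^*(BS^3;\bb Z_2)=\bb Z_2[\alpha]$. In a polynomial ring in one variable, the containment $\lra{\alpha^{4m}}\subseteq\lra{\alpha^{4n}}$ holds exactly when $\alpha^{4n}$ divides $\alpha^{4m}$, that is, when $4n\le 4m$, hence $n\le m$, as required.

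The mathematical content of the corollary is carried almost entirely by Proposition \ref{index}; once the index of the odd sphere is in hand, both directions are immediate. Accordingly I do not anticipate a genuine obstacle here. The only points that need to be stated with care are the verification that the coordinate inclusion is $S^3$-equivariant in the sufficiency direction, and the elementary divisibility of monomials in $\bb Z_2[\alpha]$ in the necessity direction.
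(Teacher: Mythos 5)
Your proof is correct and follows exactly the route the paper intends: the paper derives this corollary "immediately" from Proposition \ref{index} via monotonicity of the index, and your explicit coordinate inclusion supplies the sufficiency direction that the paper leaves implicit. No issues.
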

 \begin{theorem}\label{indst}
  a) $\Index_{S^3}(\bb HV_{n,k};\bb Z_2)=\lra{\omega^N}$,
  where $\omega$ is the generators of $\pmod 2$ cohomology algebra of $BS^3$ and $|\omega|=4$.\\
  b) $\Index_{S^3}^{4(n-k+1)}(\bb HV_{n,k};\bb Z)=\bb Z[\binom{m}{m-l+1}k^{n-k+1}]$, where $k$ is the generator of the integral cohomology algebra of $BS^3$ and $|k|=4$.
 \end{theorem}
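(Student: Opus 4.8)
\emph{Part (a).} The plan is to reduce everything to Theorem \ref{Quaternion_cohomology}. Since the $S^3$-action on $\bb HV_{n,k}$ is free, the homotopy orbit space is $(\bb HV_{n,k})_{hS^3}\simeq \bb HV_{n,k}/S^3=\bb HX_{n,k}$, so the canonical fibration $\bb HV_{n,k}\to (\bb HV_{n,k})_{hS^3}\to BS^3$ is exactly \eqref{f3}, and the map $p$ entering the definition of the index is the map $p''$ of Theorem \ref{Quaternion_cohomology}. Writing $H^*(BS^3;\bb Z_2)=\bb Z_2[\omega]$ with $|\omega|=4$, I would use $p^*(\omega)=y$ to get $p^*(\omega^j)=y^j$ for all $j$. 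By Theorem \ref{Quaternion_cohomology}, $y^j\neq 0$ for $0\le j<N$ (these are distinct basis elements) and $y^N=0$ in $\bb Z_2[y]/\lra{y^N}$; hence $\ker p^*$ is precisely the ideal $\lra{\omega^N}$. This settles (a) with essentially no further computation.

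\emph{Part (b).} For the integral statement I would run the Serre spectral sequence of \eqref{f3} over $BS^3=BSp(1)$ with $\bb Z$-coefficients, exploiting the pullback square in the proof of Theorem \ref{Quaternion_cohomology}. Let $k_1$ denote the generator of $H^*(BSp(1);\bb Z)=\bb Z[k_1]$, $|k_1|=4$ (this is the ``$k$'' of the statement). The classifying map $i\colon BSp(1)\to BSp(n)$ pulls the universal bundle back to $\oplus_n\gamma$, so by multiplicativity of the symplectic classes its total symplectic class is $(1+k_1)^n$, giving $i^*(k_i)=\binom{n}{i}k_1^i$. Transgressing the fiber generators exactly as in the mod $2$ computation, $d_{4i}(z_i)=i^*(k_i)=\binom{n}{i}k_1^i$ for $n-k+1\le i\le n$; in particular the lowest class $z_{n-k+1}$, of degree $4(n-k+1)-1$, satisfies $d_{4(n-k+1)}(z_{n-k+1})=\binom{n}{n-k+1}k_1^{n-k+1}$.

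I would then analyze the edge homomorphism $p^*\colon H^{4(n-k+1)}(BS^3;\bb Z)=\bb Z\cdot k_1^{n-k+1}\to H^{4(n-k+1)}(\bb HX_{n,k};\bb Z)$. Since $\bb HV_{n,k}$ has no reduced cohomology below degree $4(n-k+1)-1$, the bottom-row entry $E^{4(n-k+1),0}=\bb Z\cdot k_1^{n-k+1}$ is hit only by the transgression of $z_{n-k+1}$, and a degree count rules out any higher differential into that spot (a source in fiber degree $\ge 4(n-k+1)-1$ forces the base degree to be negative). Hence $E_\infty^{4(n-k+1),0}=\bb Z/\binom{n}{n-k+1}$ and the edge map factors as the quotient $\bb Z\cdot k_1^{n-k+1}\twoheadrightarrow \bb Z/\binom{n}{n-k+1}\hookrightarrow H^{4(n-k+1)}(\bb HX_{n,k};\bb Z)$, so $\Index_{S^3}^{4(n-k+1)}(\bb HV_{n,k};\bb Z)=\ker p^*$ is the subgroup generated by $\binom{n}{n-k+1}k_1^{n-k+1}$, as claimed.

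The hard part will be the integral bookkeeping in (b): I must justify multiplicativity of the symplectic classes so that $i^*(k_i)=\binom{n}{i}k_1^i$ holds integrally and not merely after mod $2$ reduction, identify the integral transgression with this pullback, and verify that the bottom row of the spectral sequence is untouched by higher differentials and free of extension problems, so that the edge homomorphism computes the index exactly in the single degree $4(n-k+1)$.
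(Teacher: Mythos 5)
Your proposal is correct and follows essentially the same route as the paper: part (a) is read off from Theorem \ref{Quaternion_cohomology} via the identification of the homotopy orbit space with $\bb HX_{n,k}$ and the fibration \eqref{f3}, and part (b) reruns the Serre spectral sequence argument integrally to get $d_{4(n-k+1)}(z_{n-k+1})=\binom{n}{n-k+1}k^{n-k+1}$ and extract the kernel of the edge homomorphism. You supply more detail than the paper (which is quite terse here), and you also implicitly correct the typo $\binom{m}{m-l+1}$ in the statement to $\binom{n}{n-k+1}$.
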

 \begin{proof}
 From equation \eqref{f3} and using Serre spectral sequence associated with it, Theorem \ref{Quaternion_cohomology} readily gives part $(a)$ of the theorem.\\

 For part $(b)$, we again use the Serre spectral sequence associated with fibration (6) and the pullback square in the proof of the Theorem \ref{Quaternion_cohomology}. Recall that $H^*(\bb HV_{n,k};\bb Z)=\Lambda_{\bb Z}(z_{n-k+1}, z_{n-k+2}, \ldots, z_{n})$ where  $|z_i|=4i-1$. The generator $z_{n-k+1}$ is transgressive, translating the proof of the Theorem \ref{Quaternion_cohomology} in integral set up we get \[d_{4(n-k+1)}(z_{n-k+1})=\binom{n}{n-k+1}k^{n-k+1}.\] This completes the proof.
 \end{proof}
 
We will use the index computations to rule out the existence of $S^3$-equivariant maps between quaternionic Stiefel manifolds and certain $S^3$-spheres. 


 \textit{Proof of Theorem \ref{main3}(a):}
 The existence of such a $S^3$-equivariant map will imply
 \[\Index_{S^3} (\bb HV_{m,l})\subset \Index_{S^3} (\bb HV_{n,k}). \] From part $(b)$ of the Theorem \ref{indst} the proof readily follows.\\

 \textit{Proof of Theorem \ref{main3}(b):}
 There exists $S^3$-map $f:~Sp(n)\to Sp(m)$ if and only if $n~|~m$.  
 The second part of the Theorem (4.4) implies that if there exists $S^3$-map $f:~Sp(n)\to Sp(m)$, then $n$ must divide $m$.\\
 For the `if' part, note that $m=nk$ for some $k\in N^+$  implies that any $M\in Sp(n)$ can be embedded in $Sp(m)$. This can be done by putting $M$ as $n\times n$ block matrix in the diagonal and putting everywhere $0$ otherwise. This completes the proof.\\

 \textit{Proof of Theorem \ref{main3}(c), (d)} is direct consequence of the Theorem \ref{indst}.

{\bf Acknowledgements :} We would like to express our gratitude to Aniruddha Naolekar for introducing us to the problem and for the productive discussion we had with him during our stay at the Indian Statistical Institute, Bangalore Centre, while working on this project. We also thank the anonymous referee for his helpful suggestions that improved the presentation of the manuscript.

\bibliography{main}

\begin{thebibliography}{10}
\providecommand{\url}[1]{#1}
\csname url@samestyle\endcsname
\providecommand{\newblock}{\relax}
\providecommand{\bibinfo}[2]{#2}
\providecommand{\BIBentrySTDinterwordspacing}{\spaceskip=0pt\relax}
\providecommand{\BIBentryALTinterwordstretchfactor}{4}
\providecommand{\BIBentryALTinterwordspacing}{\spaceskip=\fontdimen2\font plus
\BIBentryALTinterwordstretchfactor\fontdimen3\font minus \fontdimen4\font\relax}
\providecommand{\BIBforeignlanguage}[2]{{%
\expandafter\ifx\csname l@#1\endcsname\relax
\typeout{** WARNING: IEEEtran.bst: No hyphenation pattern has been}%
\typeout{** loaded for the language `#1'. Using the pattern for}%
\typeout{** the default language instead.}%
\else
\language=\csname l@#1\endcsname
\fi
#2}}
\providecommand{\BIBdecl}{\relax}
\BIBdecl

\bibitem{ps1988}
\BIBentryALTinterwordspacing
P.~Sankaran, ``Which {G}rassmannians bound?'' \emph{Arch. Math. (Basel)}, vol.~50, no.~5, pp. 474--476, 1988. [Online]. Available: \url{https://doi.org/10.1007/BF01196511}
\BIBentrySTDinterwordspacing

\bibitem{Korbaš2010}
\BIBentryALTinterwordspacing
J.~Korba\v{s}, ``{The cup-length of the oriented {G}rassmannians vs a new bound for zero-cobordant manifolds},'' \emph{Bulletin of the Belgian Mathematical Society - Simon Stevin}, vol.~17, no.~1, pp. 69 -- 81, 2010. [Online]. Available: \url{https://doi.org/10.36045/bbms/1267798499}
\BIBentrySTDinterwordspacing

\bibitem{NT2014}
\BIBentryALTinterwordspacing
A.~C. Naolekar and A.~S. Thakur, ``Note on the characteristic rank of vector bundles,'' \emph{Math. Slovaca}, vol.~64, no.~6, pp. 1525--1540, 2014. [Online]. Available: \url{https://doi.org/10.2478/s12175-014-0289-4}
\BIBentrySTDinterwordspacing

\bibitem{KNT2012}
\BIBentryALTinterwordspacing
J.~Korba\v{s}, A.~C. Naolekar, and A.~S. Thakur, ``Characteristic rank of vector bundles over {S}tiefel manifolds,'' \emph{Arch. Math. (Basel)}, vol.~99, no.~6, pp. 577--581, 2012. [Online]. Available: \url{https://doi.org/10.1007/s00013-012-0454-3}
\BIBentrySTDinterwordspacing

\bibitem{korbas2014}
\BIBentryALTinterwordspacing
{\ L'}.~Balko and J.~Korba\v{s}, ``A note on the characteristic rank and related numbers,'' \emph{Math. Slovaca}, vol.~64, no.~6, pp. 1541--1544, 2014. [Online]. Available: \url{https://doi.org/10.2478/s12175-014-0290-y}
\BIBentrySTDinterwordspacing

\bibitem{BK2013}
\BIBentryALTinterwordspacing
------, ``A note on the characteristic rank of null-cobordant manifolds,'' \emph{Acta Math. Hungar.}, vol. 140, no. 1-2, pp. 145--150, 2013. [Online]. Available: \url{https://doi.org/10.1007/s10474-012-0279-3}
\BIBentrySTDinterwordspacing

\bibitem{Korbaš2015}
\BIBentryALTinterwordspacing
J.~Korba\v{s}, ``The characteristic rank and cup-length in oriented {G}rassmann manifolds,'' \emph{Osaka J. Math.}, vol.~52, no.~4, pp. 1163--1172, 2015. [Online]. Available: \url{http://projecteuclid.org/euclid.ojm/1447856038}
\BIBentrySTDinterwordspacing

\bibitem{PETROVIC2017}
\BIBentryALTinterwordspacing
Z.~Z. Petrović, B.~I. Prvulović, and M.~Radovanović, ``Characteristic rank of canonical vector bundles over oriented {G}rassmann manifolds $\tilde{G}_{3,n}$,'' \emph{Topology and its Applications}, vol. 230, pp. 114--121, 2017. [Online]. Available: \url{https://www.sciencedirect.com/science/article/pii/S0166864117303814}
\BIBentrySTDinterwordspacing

\bibitem{BK2010}
\BIBentryALTinterwordspacing
{\ L'}.~Balko and J.~Korba{\v{s}}, ``A note on the characteristic rank of a smooth manifold,'' \emph{Group actions and homogeneous spaces, 1-8, Fak. Mat. Fyziky Inform. Univ. Komensk{\'e}ho, Bratislava}, 2010. [Online]. Available: \url{https://doi.org/10.2478/s12175-014-0290-y}
\BIBentrySTDinterwordspacing

\bibitem{BC2020}
\BIBentryALTinterwordspacing
S.~Basu and P.~Chakraborty, ``On the cohomology ring and upper characteristic rank of {G}rassmannian of oriented 3-planes,'' \emph{Journal of Homotopy and Related Structures}, vol.~15, no.~1, pp. 27--60, 2020. [Online]. Available: \url{https://doi.org/10.1007/s40062-019-00244-1}
\BIBentrySTDinterwordspacing

\bibitem{BFG2023}
\BIBentryALTinterwordspacing
S.~Basu, S.~Gondhali, and F.~Safikaa, ``A study of topology of the flip {S}tiefel manifolds,'' \emph{Proc. Indian Acad. Sci. Math. Sci.}, vol. 134, no.~2, 2024. [Online]. Available: \url{https://doi-org.libraryisikolkata.remotexs.in/10.1007/s12044-024-00789-z}
\BIBentrySTDinterwordspacing

\bibitem{FH1988}
E.~Fadell and S.~Husseini, ``An ideal-valued cohomological index theory with applications to {B}orsuk-{U}lam and {B}ourgin-{Y}ang theorems,'' \emph{Ergodic Theory Dynam. Systems}, vol.~8, no. Charles Conley Memorial Issue, pp. 73--85, 1988.

\bibitem{Ha05}
\BIBentryALTinterwordspacing
Y.~Hara, ``The degree of equivariant maps,'' \emph{Topology Appl.}, vol. 148, no. 1-3, pp. 113--121, 2005. [Online]. Available: \url{https://doi.org/10.1016/j.topol.2004.08.003}
\BIBentrySTDinterwordspacing

\bibitem{Pe13}
\BIBentryALTinterwordspacing
Z.~Z. Petrovi\'{c} and B.~I. Prvulovi\'{c}, ``Equivariant maps between complex {S}tiefel manifolds,'' \emph{Osaka J. Math.}, vol.~50, no.~1, pp. 187--196, 2013. [Online]. Available: \url{http://projecteuclid.org/euclid.ojm/1364390426}
\BIBentrySTDinterwordspacing

\bibitem{BK21}
S.~Basu and B.~Kundu, ``The index of certain {S}tiefel manifolds,'' \emph{arXiv preprint arXiv:2103.02500}, 2021.

\bibitem{dasgupta2024}
\BIBentryALTinterwordspacing
D.~Dasgupta, ``Characteristic classes on certain quotients of {S}tiefel manifolds,'' \emph{arXiv preprint arXiv:2402.10521}, 2024. [Online]. Available: \url{https://arxiv.org/abs/2402.10521}
\BIBentrySTDinterwordspacing

\bibitem{GH68}
\BIBentryALTinterwordspacing
S.~Gitler and D.~Handel, ``The projective {S}tiefel manifolds-{I},'' \emph{Topology}, vol.~7, no.~1, pp. 39--46, 1968. [Online]. Available: \url{https://www.sciencedirect.com/science/article/pii/0040938386900133}
\BIBentrySTDinterwordspacing

\bibitem{Bo53}
\BIBentryALTinterwordspacing
A.~Borel, ``Sur la cohomologie des espaces fibres principaux et des espaces homogenes de groupes de {L}ie compacts,'' \emph{Annals of Mathematics}, vol.~57, no.~2, pp. 115--207, 1953. [Online]. Available: \url{https://api.semanticscholar.org/CorpusID:123804171}
\BIBentrySTDinterwordspacing

\bibitem{brown2012cohomology}
K.~S. Brown, \emph{Cohomology of groups}.\hskip 1em plus 0.5em minus 0.4em\relax Springer Science \& Business Media, 1982, vol.~87.

\bibitem{Ru69}
\BIBentryALTinterwordspacing
C.~A. Ruiz, ``The cohomology of the complex projective {S}tiefel manifold,'' \emph{Transactions of the American Mathematical Society}, vol. 146, pp. 541--547, 1969. [Online]. Available: \url{http://www.jstor.org/stable/1995192}
\BIBentrySTDinterwordspacing

\bibitem{Zhu-Pop2022}
\BIBentryALTinterwordspacing
G.~E. Zhubanov and F.~Y. Popelenskii, ``On the cohomology rings of partially projective quaternionic {S}tiefel manifolds,'' \emph{Mat. Sb.}, vol. 213, no.~3, pp. 21--40, 2022. [Online]. Available: \url{https://doi.org/10.4213/sm9601}
\BIBentrySTDinterwordspacing

\bibitem{Pe97}
Z.~Z. Petrovi\'{c}, ``On equivariant maps between {S}tiefel manifolds,'' \emph{Publ. Inst. Math. (Beograd) (N.S.)}, vol. 62(76), pp. 133--140, 1997.

\end{thebibliography}
\bibliographystyle{IEEEtran}
\end{document}